\theoremstyle{definition}
\newtheorem{theorem}{Theorem}
\newtheorem{corollary}[theorem]{Corollary}
\newtheorem{lemma}[theorem]{Lemma}
\newtheorem{proposition}[theorem]{Proposition}
\newtheorem{remark}[theorem]{Remark}
\newtheorem{example}[theorem]{Example}
\newtheorem{definition}[theorem]{Definition}
\DeclareMathOperator{\dom}{dom}
\renewcommand{\Im}{\mathop{\mathrm{Im}}}
\def\max{\mathop{\mathrm{max}}\nolimits}
\newcommand{\CI}{{\mathrm{\bf C}}}\newcommand{\RI}{{\mathrm{\bf R}}}
\newcommand{\ZI}{{\mathrm{\bf Z}}}\newcommand{\NI}{{\mathrm{\bf N}}}
\newcommand{\degr}{{\rm deg}}
\newcommand{\R}{R}   
 \newcommand{\C}{C}
\newcommand{\G}{\Gamma}
\newcommand{\IIi}{\mathrm{II_1}}
\newcommand{\GL}{\mathrm {GL}}
\newcommand{\SL}{\mathrm {SL}}
\newcommand{\bE}{\mathbf{E}}
\newcommand{\norm}[1]{\|#1\|}
\newcommand{\abs}[1]{|#1|}
\newcommand{\tr}{\mathop{\mathrm{Tr}}}
\newcommand{\del}{\partial}
\newcommand{\h}{{\mathfrak{h}}}
\title[Uniform non-amenability, cost, and the first $\ell^2$-Betti
number]{Uniform non-amenability, cost, and the first $\ell^2$-Betti number}
\author{Russell Lyons$^*$}
\address{\hskip-\parindent Russell Lyons, Department of Mathematics,
Indiana University,
Bloomington, IN 47405-5701 }
\email{rdlyons@indiana.edu}
\author{Mika\"el Pichot}
\address{\hskip-\parindent
Mika\"el Pichot, I.H.E.S., 35 Route de Chartres, F-91440 Bures-sur-Yvette, France, and Isaac Newton Institute for Mathematical Sciences, Cambridge, U.K.}
\email{pichot@ihes.fr}
\author{St\'ephane Vassout}
\address{\hskip-\parindent St\'ephane Vassout, Institut de Math\'ematiques de Jussieu and Universit\'e Paris 7}
\email{vassout@math.jussieu.fr}
\thanks{
$^*$Supported partially by NSF grant DMS-0705518 and Microsoft Research.\\
Keywords. $\ell^2$-Betti numbers,  uniform non-amenability, measured equivalence relations.\\
2000 Mathematics Subject Classification. Primary 20F65.}
\date{31 March 2008}
\begin{document}

\begin{abstract}
It is shown that  $2\beta_1(\G)\leq h(\G)$ for any countable group $\G$,
where $\beta_1(\G)$ is the first $\ell^2$-Betti number  and $h(\G)$ the
uniform isoperimetric constant.  In particular, a countable group with non-vanishing first $\ell^2$-Betti number is uniformly non-amenable.

We then define isoperimetric constants in the framework of measured
equivalence relations. For an ergodic measured equivalence relation $R$ of type
$\IIi$, the uniform isoperimetric constant $h(R)$ of $R$ is invariant under
orbit equivalence and satisfies 
$$
2\beta_1(R)\leq 2C(R)-2\leq h(R)
\,,
$$
where
$\beta_1(\R)$ is the first $\ell^2$-Betti number and $C(R)$ the cost of $R$
in the sense of Levitt (in particular  $h(R)$ is a non-trivial invariant). 
In contrast with the group case,  uniformly non-amenable measured equivalence relations of type $\IIi$ always contain non-amenable subtreeings.

An ergodic version $h_e(\G)$ of the uniform isoperimetric constant $h(\G)$
is defined as the infimum over  all essentially free ergodic and measure
preserving actions $\alpha$ of $\G$ of the uniform isoperimetric constant
$h(\R_\alpha)$ of the  equivalence relation $R_\alpha$ associated to
$\alpha$. By establishing a connection with the cost of measure-preserving
equivalence relations, we prove that $h_e(\G)=0$ for any lattice $\G$  in a
semi-simple Lie group of real rank at least 2 (while $h_e(\G)$ does not vanish in general).
\end{abstract}

\maketitle

\section{Introduction}

The isoperimetric constant of a graph offers a simple way to capture the isoperimetric behavior of finite sets in this graph. For a finitely generated countable group it reflects  isoperimetry in  Cayley graphs associated to finite generating sets of this group and is related to amenability.
In the present paper, we introduce an analogue of this constant for
measured equivalence relations of type $\IIi$. As we shall see, it behaves
in a very different manner from a measured dynamic point of view from the
uniform isoperimetric constant of a group.

Let $R$ be a measured equivalence relation of type $\IIi$ on a standard
(non-atomic) probability space $(X,\mu)$. Our main interest lies in two geometric
invariants that have recently been attached to $R$: the cost and the
$\ell^2$-Betti numbers. The cost of $R$  is a real number with values in
$[1,\infty]$ (assuming $R$ to be ergodic) denoted by $C(R)$.
From its definition one can readily infer that it is invariant under
isomorphism---i.e., orbit equivalence---of $R$ and the main problem is to
compute
it. In \cite{Gaboriau99} Damien Gaboriau established an explicit formula
relating the cost of an amalgamated free product (over amenable equivalence
subrelations) to the costs of their components; this allowed him to solve
the long-standing problem of distinguishing the free groups up to orbit
equivalence. In \cite{Gaboriau02}  he went further and introduced the
so-called $\ell^2$-Betti numbers of $R$. These are non-negative numbers
$\beta_0(R),\beta_1(R),\beta_2(R),\ldots$  in $[0,\infty]$ defined by
geometric constructs using an approximation process (as for their analogues for
countable groups; see   Cheeger and Gromov  \cite{CG86}) and one of the
main problems here (solved in \cite{Gaboriau02}) was to show that the
resulting numbers only depend on the isomorphism class of $R$. The first
$\ell^2$-Betti number provides another way to distinguish the free groups up to
orbit equivalence. 
The relation between $\ell^2$-Betti numbers and the cost is still unclear,
but the inequality $C(R)\geq \beta_1(R)+1$ is known to hold for any ergodic
equivalence relation of type $\IIi$ \cite{Gaboriau02}.  %
Recall that
$\ell^2$-Betti numbers were first introduced by Atiyah \cite{Atiyah76} in 1976 in his work on the index of equivariant elliptic operators on coverings spaces of Riemannian manifolds.
In the present paper we consider a new isomorphism invariant for $R$, the
uniform isoperimetric constant, $h(R)$. It takes values in $[0,\infty]$ and
is defined as an infimal value of isoperimetric ratios for `finite sets' in
the Cayley graphs of $R$ (see Section \ref{CheegerConstant}). For compact
Riemannian manifolds (and their coverings), the isoperimetric constant  was
considered by Cheeger  when he proved his well-known `Cheeger inequality'
relating it to the bottom of the spectrum of the Laplacian.

Our main theorem asserts that for any ergodic measured equivalence relation
of type $\IIi$, one has
\[
\beta_1(R)\leq C(R)-1\leq \frac{h(R)}{2}.
\]

Here the relation $R$ is assumed to be finitely generated, in which case
the uniform isoperimetric constant $h(R)$ has a finite value (a natural extension of
$h(R)$ to infinitely generated $R$ is to set $h(R)=\sup_{R'} h(R')$, where $R'$ runs over all finitely generated 
subrelations of $R$, 
so that, for instance, a measured equivalence relation $R$ given by a
measure-preserving and essentially free action of the free group $F_\infty$
on infinitely many generators will satisfy $\beta_1(R)=h(R)=\beta_1(F_\infty)=h(F_\infty)=+\infty$ \cite{Gaboriau02}).

We start by proving  the following weaker inequality in Section \ref{Cheeger}, 
\[
\beta_1(\G)\leq h(\G)
\]  
for a finitely generated group $\G$ (in particular, a countable group with
non-vanishing first $\ell^2$-Betti number is uniformly non-amenable). The
proof  uses only group-theoretic tools.  It  is inspired by  the proof of
Cheeger-Gromov's celebrated vanishing theorem in \cite{CG86} asserting that
when $\G$ is amenable, the sequence  
 \[
 \beta_0(\G),\beta_1(\G),\beta_2(\G),\ldots
 \] 
 of all  $\ell^2$-Betti numbers vanishes identically. Note that for an
 amenable  $\G$, one has $h(\G)=0$, but there do exist non-amenable groups with $h(\G)=0$ (see \cite{Osin,Arj}). 
 
Now the inequality $\beta_1(\G)\leq h(\G)$ is not optimal in general, but
we do not know how to get the optimal inequality using only group-theoretic arguments.
In Section \ref{perco}, we prove that in fact  
 \[
 \beta_1(\G)\leq \frac{h(\G)}{2}.
 \] 
Our proof relies on invariant
percolation theory. This inequality is  an equality for free groups, where
$2\beta_1(F_k)=h(F_k)=2k-2$, and thus is optimal. However, it can be
strict as there are groups of cost 1 that have $h>0$, for instance, higher rank lattices in semi-simple Lie groups.

The general inequality  relating the cost and the isoperimetric constant of
a measured equivalence relation, as stated above, is proved in Section
\ref{CheegerConstant}.  Together with Gaboriau's results \cite{Gaboriau02},
it implies that  $2\beta_1(\G)\leq h(\G)$ as well for every finitely generated group $\G$.

We call an ergodic measured groupoid $G$ (of type $\IIi$) uniformly
non-amenable if its isoperimetric constant $h(G)$ is non-zero. The class of
uniformly non-amenable groups is quite large and has been studied recently
by Osin (see, e.g., \cite{Osin,Osin2}) and  Arzhantseva, Burillo, Lustig, Reeves, Short, Ventura  (\cite{Arj}). Breuillard and Gelander \cite{Breuillard} have shown that for an arbitrary field $K$, any non-amenable and finitely generated subgroup of $\GL_n(K)$ is uniformly non-amenable. 

The class of uniformly non-amenable measured equivalence relations turns
out to be ``much smaller'' than its corresponding group-theoretic analogue.
For instance, if an equivalence relation $R$ is the partition into the
orbits of an essentially free measure-preserving action of a (non-uniform)
lattice in a higher rank Lie group, we have $h(R)=0$ (Corollary
\ref{lattice}) and thus $R$ is not uniformly non-amenable. Note that $R$
has the property T of Kazhdan (cf.\ \cite{ts} and references) in that case. Also, equivalence relations
that are decomposable as a direct product of two infinite equivalence
subrelations have trivial uniform isoperimetric constant (Corollary
\ref{product}). These results are derived by establishing a relation
between the cost and the uniform isoperimetric constant and by appealing to
some of Gaboriau's results in \cite{Gaboriau99}. Namely, we show (in
Section \ref{ErgodicCheeger}) that an ergodic equivalence relation with
cost 1 has a vanishing isoperimetric constant. The proof is reminiscent of the (non) concentration of measure property for measured equivalence relations (see \cite{ef}) which is implemented here via the Rokhlin Lemma. We also show that uniformly non-amenable measured equivalence relations have trivial fundamental groups (see Section \ref{Fund}; this will alternatively follow from \cite{Gaboriau99}) and always contain a non-amenable subtreeing (see Section \ref{vN}). 
The latter is related to the measure-theoretic analogue of the Day--von
Neumann problem. Recall that in the group case this problem (i.e., is it
true that every non-amenable group contains a non-amenable free group?) is
well known to have a negative answer, as was proved by Ol'shanskii. Osin
\cite{Osin2} showed that the answer was negative  as well even within the
class of uniformly non-amenable groups. To prove that it is positive in our situation, we combine our results with the corresponding known fact for equivalence relations with cost greater than 1 (\cite{KM,th}).

\bigskip

\noindent{\it Acknowledgements.} The second author was supported by an EPDI Post-doctoral Fellowship 
and is grateful to IHES for its  hospitality.

\section{The uniform non-amenability of groups with non-vanishing $\beta_1$}\label{Cheeger}
In this section, we recall the definitions of the uniform isoperimetric constant $h(\Gamma)$ and the first $\ell^2$-Betti number $\beta_1(\Gamma)$ for a countable group $\Gamma$ and show that $\beta_1(\Gamma) \leq h(\Gamma)$. This inequality is not optimal and will be improved in the next sections to $2\beta_1(\Gamma) \leq h(\Gamma)$ by using ergodic-theoretic arguments.

Let  $Y$ be a locally finite  graph and  $A$ be a subset of vertices of
$Y$. Define the  \emph{(edge) boundary  of $A$ in $Y$} to be the  set $\del_YA$ of edges of $Y$ with one extremity in $A$ and the other one in $Y\backslash A$. 
The  \emph{isoperimetric constant} of $Y$  is the non-negative number 
\[
h(Y) = \inf_{A\subset Y} \frac{\abs{ \del_Y A}}{\abs A},
\]
where the infimum runs over all finite subsets $A$ of vertices of $Y$.

Let $\G$ be a finitely generated group and $S$ be a finite generating set
of $\G$. Recall that the \emph{Cayley graph}  of $\G$ with respect to $S$
is the graph $Y$ whose vertices are the elements of $\G$ and whose edges
are given by right multiplication by elements of $S$. The isoperimetric
constant of this graph is called the \emph{isoperimetric constant of $\G$ with respect to $S$} and is denoted by $h_S(\G)$.   F\o lner's theorem asserts that a finitely generated group $\G$ is amenable if and only if $h_S(\G)=0$ for some (hence every) finite generating set $S$ \cite[Chap. VII]{Harpe00}.

By definition the  \emph{uniform isoperimetric constant} of  $\G$ is the infimum
\[
h(\G)=\inf_S h_S(\G)
\]
over all finite generating sets $S$ of $\G$.

\begin{example}
Osin \cite{Osin} has given examples of non-amenable groups with $h(\G)=0$.
For instance, he proved (see \cite[Example 2.2]{Osin}) that  the Baumslag-Solitar groups, with presentation 
\[
\mathrm{BS}_{p,q}=\langle a,t\mid t^{-1}a^pt=a^q\rangle\,,
\]
where $p,q>1$ are relatively prime, have vanishing uniform isoperimetric
constant (note that the uniform isoperimetric constant considered in
\cite{Osin} is defined in terms of the regular representation of the given
group: compare Section 13 in the paper of Arzhantseva et al.\ \cite{Arj}). 
\end{example}

Finitely generated groups with $h(\G)>0$ are called \textit{uniformly
non-amenable} (see  \cite{Arj}).  In fact our definition differs slightly
from the one given in \cite{Arj}  due to a different choice for the
boundary of a finite subset of vertices in a graph (basically the present
paper deals with the ``edge boundary" while the definition in \cite{Arj} involves the ``internal boundary"; see Appendix \ref{Hyperb} for more details). 
The Baumslag-Solitar groups have vanishing uniform isoperimetric constant for any reasonable  definition of the boundary.

The $\ell^2$-Betti numbers of a countable group $\G$ are non-negative real
numbers $\beta_0(\G),$ $\beta_1(\G), \ldots$ coming from
$\ell^2$-(co-)homology as $\G$-dimension (also known as Murray-von Neumann
dimension). We  refer to \cite{Gromov93,CG86,Pansu96,Luck02} for their
precise definition.  By a well-known theorem due to  Cheeger and Gromov
\cite{CG86}, the $\ell^2$-Betti numbers of a countable amenable group vanish
identically.  By elaborating on  ideas of \cite{CG86}  (see in particular
\textsection 3 in \cite{CG86}), in the case of non-amenable groups we obtain
the following explicit relation between the first $\ell^2$-Betti number and
the uniform isoperimetric constant.

\begin{theorem}\label{CG-groups}
Let $\G$ be a finitely generated group. Then $\beta_1(\G)\leq h(\G)$.
\end{theorem}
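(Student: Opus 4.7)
The plan is to adapt, in quantitative form, the strategy of Cheeger and Gromov's vanishing theorem for $\ell^2$-Betti numbers of amenable groups (see \cite{CG86}, \S 3). In that argument a F\o lner sequence provides cutoff functions $\chi_{A_n}$ forcing all reduced $\ell^2$-cohomology classes to vanish in the limit; here a single finite set $A\subset\Gamma$ whose ratio $|\partial A|/|A|$ is close to $h_S(\Gamma)$ plays the role of a single ``almost F\o lner set'', producing the quantitative bound $\beta_1(\Gamma)\le|\partial A|/|A|$ in place of vanishing.

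First I would fix a finite generating set $S$ and realise $\beta_1(\Gamma)$ as the von Neumann dimension $\tau(P_{\mathcal H})$ of the Hilbert $\Gamma$-module $\mathcal H\subset(\ell^2\Gamma)^S$ of harmonic $1$-cocycles, i.e.\ cocycles orthogonal to the closed coboundary subspace $\overline{B^1}$, where the coboundary map is $\Phi\colon\ell^2\Gamma\to(\ell^2\Gamma)^S$ given by $\Phi(v)=((1-s)v)_{s\in S}$. The elementary numerics behind the estimate are the identities $\|\chi_A\|^2=|A|$ and $\sum_{s\in S}\|(1-s)\chi_A\|^2=\sum_{s\in S}|A\triangle sA|=|\partial A|$ (up to a convention factor coming from orienting edges).

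The core of the proof is a cutoff estimate. Using the near-invariance of $\chi_A/\sqrt{|A|}$, I would construct an $\mathcal N\Gamma$-equivariant contraction $Q_A$ on $(\ell^2\Gamma)^S$ whose range lies in $\overline{B^1}$ and whose defect satisfies
\[
\tau(I-Q_A)\;\le\;\frac{|\partial A|}{|A|}.
\]
Since $\mathcal H\perp\overline{B^1}$ one has $P_{\mathcal H}Q_A=0$, whence
\[
\beta_1(\Gamma)\;=\;\tau(P_{\mathcal H})\;=\;\tau\bigl(P_{\mathcal H}(I-Q_A)\bigr)\;\le\;\tau(I-Q_A)\;\le\;\frac{|\partial A|}{|A|}.
\]
Taking the infimum over finite subsets $A$ (yielding $h_S(\Gamma)$) and then over finite generating sets $S$ gives $\beta_1(\Gamma)\le h(\Gamma)$.

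The main technical obstacle is the construction of the cutoff $Q_A$: its defect trace must be controlled by the edge boundary $|\partial A|$ rather than by the naive ambient count $|A|\cdot|S|$. This is precisely the group-theoretic distillation of the F\o lner averaging argument of \cite{CG86}: the almost-invariance of $\chi_A$ has to be converted into a trace estimate in $\mathcal N\Gamma$ with the isoperimetric ratio appearing as the error term. Once this step is in place, the rest of the argument is a clean dimension count.
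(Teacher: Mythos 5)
Your framework is the cohomological mirror of the paper's argument (the paper works with reduced $\ell^2$-\emph{homology} of the Cayley graph, realising $\beta_1(\G)$ as $\dim_\G$ of the harmonic $1$-chains orthogonal to the finitely supported cycles), and your opening dimension count is the right one. But the proposal has a genuine gap, and it sits exactly where you place "the main technical obstacle": the operator $Q_A$ is never constructed, and its postulated properties do all the work. Worse, as literally stated the postulate is inconsistent: if $Q_A$ is a contraction on all of $(\ell^2\Gamma)^S$ with range in $\overline{B^1}$, then $\tau(Q_A)=\tau(P_{\overline{B^1}}Q_A)\le\dim_\G\overline{B^1}=1$, so $\tau(I-Q_A)\ge |S|-1$, which exceeds $|\del A|/|A|$ for, say, $\ZI^2$ with the standard generators. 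If instead $Q_A$ is meant to act on the cocycle space $Z^1$ (of $\G$-dimension $1+\beta_1(\G)$), the same computation gives $\tau(I-Q_A)\ge\beta_1(\G)$ automatically, so the existence of a $Q_A$ with $\tau(I-Q_A)\le|\del A|/|A|$ is \emph{equivalent} to the theorem, not a lemma on the way to it. The natural candidate built from near-invariance --- send a cocycle $c$ to the coboundary of $v=\frac{1}{|A|}\sum_{a\in A}c(a)$ --- is equivariant with range in $B^1$, but it is not evidently a contraction, and the resulting norm estimate on the defect involves $\sup_{a\in A}\|c(a)\|$ (which grows with the word length of $a$) rather than $\|c\|$; converting this into the trace bound $\tau(I-Q_A)\le|\del A|/|A|$ is precisely the content of the theorem and is not supplied.

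For comparison, the paper avoids any cutoff operator. It writes $\beta_1(\G)=\tau(P)=\frac{1}{|A|}\sum_{a\in A,\,s\in S}\langle P\delta_{(a,as)},\delta_{(a,as)}\rangle$, bounds this by $\frac{1}{|A|}\tr(\chi_{A_S}P)\le\frac{1}{|A|}\dim_\CI\bigl(\bar H_1^{(2)}(Y)_{|A_S}\bigr)$, where $A_S$ is the set of edges meeting $A$, and then invokes the decisive Lemma \ref{restr}: a harmonic chain vanishing on $\del_S A$ restricts on $A_S$ to a \emph{finitely supported cycle}, which is orthogonal to the harmonic chain itself, hence zero; so restriction to the boundary is injective and $\dim_\CI\bigl(\bar H_1^{(2)}(Y)_{|A_S}\bigr)\le|\del_S A|$. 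Nothing in your sketch plays the role of this unique-continuation-from-the-boundary step, which is what makes the edge boundary (rather than $|A|\cdot|S|$) appear. To repair your approach you would either need to prove the trace bound for an explicit $Q_A$ (and I do not see how to do so without reproving the lemma above in dual form), or abandon the operator formulation and run the restriction/injectivity argument directly.
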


\begin{proof}
Let $S$ be a finite  generating set of $\G$ and $Y$ be the Cayley graph of $\G$ with respect to $S$. Write $C_i^{(2)}(Y)$, $i=0,1$,  for the space of square integrable  functions on the $i$-cells (vertices and edges) of $Y$. Associated to the simplicial boundary $\del_Y$ on $Y$ we have a bounded  operator 
\[
\del_1^{(2)} : C_1^{(2)}(Y)\to C_0^{(2)}(Y).
\]

Denote by $Z_1(Y)$ the space of finite 1-cycles and by $Z_1^{(2)}(Y)$ square integrable 1-cycles on $Y$.    Thus $Z_1^{(2)}(Y)$ is the kernel of $\del_1^{(2)}$ while $Z_1(Y)$ is space of functions with finite support in this kernel. The first $\ell^2$-Betti number $\beta_1(\G)$ of $\G$ coincides with  the Murray-von Neumann dimension 
\[
\beta_1(\G)=\dim_\G \bar H_1^{(2)}(Y)
\]
of   the orthogonal complement   $\bar H_1^{(2)}(Y) \subset C_1^{(2)}(Y)$  of the closed subspace $\overline {Z_1(Y)}$ in $Z_1^{(2)}(Y)$,
where the closure of $Z_1(Y)$ is taken with respect to the Hilbert norm. A
proof of this fact, together with the basic definitions used here, can be
found in \cite[section 3]{sc} (in particular, this step takes care of the approximation process involved in Cheeger-Gromov's definition of $\ell^2$-Betti numbers; compare to \cite{CG86}).

Let  $\Omega \subset Y^{(1)}$ be a finite subset of edges of $Y$ and consider the space 
\[
{\bar H_1^{(2)}(Y)}_{|\Omega} = \{ \sigma_{|\Omega} \,;\;  \sigma\in \bar H_1^{(2)}(Y)\}
\]
  of restrictions of harmonic chains to $\Omega$. This is a linear subspace of the space $C_1(\Omega)$ of complex functions on $\Omega$. Let 
 $$P : C_1^{(2)}(Y) \to C_1^{(2)}(Y)$$ 
  be the (equivariant) orthogonal projection on $\bar H_1^{(2)}(Y)$ and 
  $$ \chi_\Omega : \C_1^{(2)}(Y) \to C_1^{(2)}(Y)$$ 
 be the orthogonal projection on $C_1(\Omega)$.

Given a finite set $A$ of $\G$, we denote by $A_S$ the set of edges with a
vertex in  $A$. We have $\del_S A\subset A_S$, where $\del_S A$ is the  boundary of $A$ in $Y$ defined in Section \ref{Cheeger}. Let us now prove that, for every non-empty finite set $A$ of $\G$,
\[
\beta_1(\G) \leq  \frac 1 {\abs A} \dim_\CI   {\bar H_1^{(2)}(Y)}_{|A_S}.
\]
Write $M_{A_S}$ for the composition $\chi_{A_S}P$, considered as an operator from  $\C_1(A_S)$ to itself with range $\bar H_1^{(2)}(Y)_{|A_S}$. We have
\begin{align*}
\dim_\G  \bar H_1^{(2)}(Y) &= \sum_{s\in S} \langle P\delta_{(e,s)}\mid \delta_{(e,s)}\rangle 
=  \frac 1 {\abs A} \sum_{a \in A, s \in S} \langle P\delta_{(a,as)} \mid \delta_{(a,as)}\rangle \\
&\hspace{7mm}\leq \frac 1 {\abs A} \sum_{u\in A_S} \langle P\delta_u \mid \delta_u\rangle 
= \frac 1 {\abs A} \tr M_{A_S}\\
& \hspace{15mm}\leq \frac 1 {\abs A} \dim_\CI \bar H_1^{(2)}(Y)_{|A_S}
\end{align*}

where the last inequality follows from the fact that $\norm{M_{A_S}}\leq 1$. This gives the desired inequality.

We now observe that every harmonic 1-chain (i.e., element of $\bar
H_1^{(2)}(Y)$) that ``enters" a subset $A_S$ has to intersect its boundary $\del_S A$:

\begin{lemma}\label{restr}
Let $A$ be a finite subset of $\G$.  The canonical (restriction) map 
\[
r : \bar H_1^{(2)}(Y)_{|A_S} \to \bar H_1^{(2)}(Y)_{|\del_S A}
\]
is injective.
\end{lemma}

\begin{proof}
Recall that 
\[
Z_1^{(2)}(Y) = \bar H_1^{(2)}(Y) \oplus_\perp \overline {Z_1(Y)}.
\]  
Let $\sigma \in \bar H_1^{(2)}(Y)$. If $\sigma$ vanishes on $\del_S A$, then $\sigma_{|A_S}$ is  a finite 1-cycle as the boundary operator commutes with the restriction to $A_S$ in that case. Thus $\sigma_{|A_S}$ vanishes identically. 
\end{proof}

\noindent \textit{Back to the proof of Theorem \ref{CG-groups}}.   The above Lemma \ref{restr} gives
\[
\dim_\CI \bar H_1^{(2)}(Y)_{|A_S}=\dim_\CI \bar H_1^{(2)}(Y)_{|\del_S A}
 \leq \abs{\del_S A}\,,
\]
which immediately yields the theorem:
\[
\dim_\G  \bar H_1^{(2)}(Y)  \leq \frac {\abs{\del_S A}} {\abs A}
\]
and thus $\beta_1(\G)\leq h(\G)$.
\end{proof}

\begin{remark}
 L\"uck's generalization of the Cheeger-Gromov theorem to arbitrary module
 coefficients (Theorem 5.1 in \cite{Luck}) does not hold for groups with
 vanishing uniform isoperimetric constant (as these groups may contain non-abelian free groups; compare Remark 5.14 in \cite{Luck}).
\end{remark}

\begin{remark}
As mentioned above, the inequality  $\beta_1(\G)\leq h(\G)$ is not optimal
in general. Consider, for example, the case of the free group $F_k$ on $k$
generators. As is well known, one has  $\beta_1(F_k)=k-1$ and   the uniform
isoperimetric constant $h(F_k)=2k-2$ can be computed by considering large
balls with respect to a fixed generating set, \textit{e.g.}, the usual system $S_k$ of $k$ free generators (see Appendix \ref{bounds}).
\end{remark}

\section{The optimal inequality for countable groups {\em{via}} percolation theory}\label{perco}

Percolation theory is concerned with random subgraphs of a fixed graph,
usually a Cayley graph.
Recently, the notions of cost and $\ell^2$-Betti number, as well as the
theory of equivalence relations, have found use in percolation theory; see,
e.g., \cite[Remarks after Conj.~3.8]{Lyonsphase}, \cite{GabHD}, or
\cite[Cor.~3.24]{LPSmsf}.
Percolation theory has also been used in the theory of equivalence
relations: see \cite{GabLyons}.
Here, we use percolation theory to give a short proof
of the optimal inequality relating uniform isoperimetric constants to
$\ell^2$-Betti numbers.

We call a graph \emph{transitive} if its automorphism group acts
transitively on its vertices.
A graph is a \emph{forest} if it contains no cycles.
A \emph{percolation} on a graph $Y$ is a probability measure on
subgraphs of $Y$; 
it is a \emph{foresting} if it is concentrated on forests, while it is
\emph{invariant} if it is invariant under all automorphisms of $Y$.
We denote the degree of a vertex $x$ in $Y$ by $\degr_Y x$.
If $F$ is an invariant percolation on a transitive graph $Y$, then by
definition,
for every vertex $x \in Y$, we have $2C(F) = \bE[\degr_F x]$.
See Def.~2.10 of \cite{GabHD} for the definition of $\beta_1(Y)$ when $Y$ is
a general transitive graph.

\begin{theorem}\label{rsf}
If $Y$ is a transitive graph and $F$ is a random invariant foresting of $Y$,
then 
$$ C(F)-1 \leq \frac{h(Y)}{2}.$$
\end{theorem}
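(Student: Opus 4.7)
The plan is to use the forest property of $F$ together with invariance in the most direct way: for any finite vertex set $A$, count the $F$-edges incident to $A$ by splitting into those contained in the induced subgraph on $A$ and those crossing the boundary $\partial_Y A$.

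First, I would fix a finite non-empty vertex set $A \subset V(Y)$ and write, for any realization $F$ of the percolation,
\[
\sum_{x \in A}\degr_F x \;=\; 2\,\#\{e \in F : \text{both endpoints of } e \text{ lie in } A\} \;+\; \#\{e \in F \cap \partial_Y A\}.
\]
Because $F$ is a forest almost surely, the edges of $F$ with both endpoints in $A$ form a forest on the vertex set $A$, so their number is at most $|A|-1$. The second term is bounded above by $|\partial_Y A|$. Hence, pointwise,
\[
\sum_{x \in A}\degr_F x \;\leq\; 2(|A|-1) + |\partial_Y A|.
\]

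Next, I would take expectations and use that $Y$ is transitive and $F$ is invariant, so that $\bE[\degr_F x]$ does not depend on $x$; by the stated identity this common value equals $2C(F)$. Summing over $x \in A$ therefore gives
\[
2C(F)\,|A| \;\leq\; 2(|A|-1) + |\partial_Y A|,
\]
and after dividing by $|A|$,
\[
2C(F) - 2 \;\leq\; \frac{|\partial_Y A|}{|A|} - \frac{2}{|A|} \;\leq\; \frac{|\partial_Y A|}{|A|}.
\]

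Finally, since $A$ was an arbitrary finite non-empty vertex set, I would take the infimum of the right-hand side over all such $A$ to obtain $2C(F) - 2 \leq h(Y)$, which is the desired inequality $C(F) - 1 \leq h(Y)/2$. I do not anticipate a real obstacle here: the only delicate point is the interchange of the pointwise forest bound with the expectation, which is harmless because both sides are non-negative and the bound holds almost surely. Everything else is bookkeeping: the forest inequality is what forces the factor $2$ in the denominator, which is exactly the improvement over the trivial estimate that would come from $\degr_F x \leq \degr_Y x$.
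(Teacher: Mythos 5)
Your proposal is correct and is essentially the paper's own argument: both bound $\sum_{x\in A}\degr_F x$ pointwise by $2|A|+|\del_Y A|$ using the forest property, take expectations via the identity $2C(F)=\bE[\degr_F x]$ and transitivity, and then take the infimum over finite sets $A$. Your version is only marginally more explicit (keeping the $|A|-1$ bound and the $-2/|A|$ term before discarding it), which changes nothing of substance.
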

\begin{proof}
Let $A$ be a finite set of vertices in $Y$.
We have
$$
2C(F)
= \frac{1}{\abs A} \sum_{x \in A} \bE[\degr_F x]
= \frac{1}{\abs A} \bE\Big[\sum_{x \in A} \degr_F x\Big]
\le \frac{2 \abs A + \abs{\del_Y A}}{\abs A}
=
2 + \frac{\abs{\del_Y A}}{\abs A}
.
$$
Taking the infimum over $A$ gives the desired inequality.
\end{proof}

\begin{corollary}\label{fsf}
Let $Y$ be a transitive graph.
Then 
$$ \beta_1(Y) \leq \frac{h(Y)}{2}.$$
\end{corollary}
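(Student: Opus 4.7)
The plan is straightforward once Theorem \ref{rsf} is in hand: it suffices to exhibit an invariant random foresting $F$ of $Y$ whose cost satisfies $C(F) \geq \beta_1(Y) + 1$, since then Theorem \ref{rsf} applied to $F$ immediately yields
$$
\beta_1(Y) \leq C(F) - 1 \leq \frac{h(Y)}{2}.
$$
So the entire content of the corollary is to produce such an $F$ and quote Theorem \ref{rsf}.

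The natural candidate for $F$ is the free minimal spanning forest (FMSF) on $Y$, or alternatively the free uniform spanning forest (FUSF). Both are invariant under $\Aut(Y)$ and concentrated on forests, hence are invariant random forestings in the sense of Section \ref{perco}. By a theorem of Gaboriau (\cite{GabHD}; see also \cite{LPSmsf} in the Cayley-graph setting), either of these forestings satisfies the expected-degree bound $\bE[\degr_F x] \geq 2(\beta_1(Y) + 1)$ at every vertex $x$ of $Y$, which is exactly the inequality $C(F) \geq \beta_1(Y) + 1$ that we need. This comparison is essentially built into Def.~2.10 of \cite{GabHD}, where $\beta_1(Y)$ for a general transitive graph is defined in terms of invariant $\ell^2$-chains whose cost-like invariants are realised by such spanning forests.

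The main (and only substantive) obstacle is correctly importing this invariant-percolation comparison from the literature; after that, the corollary follows in one line. As a consistency check, for $Y = \mathrm{Cay}(F_k, S_k)$ one has $\beta_1(Y) = k - 1$ and $Y$ is itself a tree, so one may take $F := Y$, giving $C(F) = k = \beta_1(Y) + 1$ together with $h(Y) = 2k - 2$; the inequality is then an equality. Combined with the identification $\beta_1(\G) = \beta_1(\mathrm{Cay}(\G, S))$ for finitely generated groups, Corollary \ref{fsf} will thus upgrade the group inequality of Theorem \ref{CG-groups} to the optimal bound $2\beta_1(\G) \leq h(\G)$ announced in the introduction, and will be the key ingredient for extending the argument to measured equivalence relations in Section \ref{CheegerConstant}.
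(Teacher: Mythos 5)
Your proposal is correct and is essentially the paper's own proof: one applies Theorem \ref{rsf} to the free uniform spanning forest $F$ of $Y$, whose cost satisfies the exact identity $C(F)-1=\beta_1(Y)$. The only discrepancy is in attribution: the paper gets this identity from Theorems 6.4 and 7.8 of \cite{BLPSusf} (first observed in \cite{Lyonsbetti} for Cayley graphs), not from a theorem of Gaboriau --- Def.~2.10 of \cite{GabHD} is used only to give the meaning of $\beta_1(Y)$ for a general transitive graph, not to provide the spanning-forest degree bound.
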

\begin{proof}
Apply Theorem \ref{rsf} to the free uniform spanning forest $F$ of $Y$; see
\cite{BLPSusf} for its definition.
The fact that $C(F) - 1 = \beta_1(Y)$ follows from Theorems 6.4 and 7.8 of
\cite{BLPSusf},
an identity first observed in \cite{Lyonsbetti} for Cayley
graphs.

\end{proof}

In particular, if $\G$ is a finitely generated group, then $\beta_1(\G) \le
h(\G)/2$.

\section{The case of measured equivalence relations}\label{CheegerConstant}
In this section, we recall the definition of the first $\ell^2$-Betti number
and the cost of a measured equivalence relation, define the isoperimetric
constant, and show that $2\beta_1(R)\leq 2C(R)-2 \leq h(R)$.

\subsection{Measured equivalence relations}
Let $(X,\mu)$ be a standard (non-atomic) probability space. An equivalence relation $R$
with countable classes on $X$ is called Borel if its graph $R\subset
X\times X$ is a Borel subset of $X\times X$. It is called a measured
equivalence relation if the $R$-saturation of a $\mu$-null subset of $X$ is
again $\mu$-null. 
For instance, if $\G$ is a countable group and $\alpha$ is a non-singular action of $\G$ on $(X,\mu)$, then the associated equivalence relation $R_\alpha$, defined as the partition of $X$ into $\G$-orbits, is a measured equivalence relation on $X$. See \cite{FM77} for more details.

 Let   $R$ be a measured  equivalence relation on  a standard probability
 space $(X,\mu)$. We assume throughout the paper that $R$ is
 \emph{ergodic}, i.e., that every saturated measurable subset of $X$ has measure 0 or 1 (in fact our results can be generalized to equivalence relations with \emph{infinite classes}). Endow $R$ with the horizontal counting measure $\h$ given by 
\[
\h(K)=\int_X\abs{K^x}d\mu(x)
\]
where $K$ is a measurable subset of $R$ and $K^x$ is the subset of $X\times X$ defined as $K^x=\{(x,y)\in K\}$. A partial automorphism  of $R$ is a partial automorphism of $(X,\mu)$ whose graph is included in $R$. One says that $R$ is of type $\IIi$ if the measure $\mu$ is invariant  under every partial automorphism of $R$.
The group of (full) automorphisms of $R$ is denoted $[R]$.

Recall that a \emph{graphing} of a measured equivalence relation can be
described in either one the following two ways (cf.\ \cite{Gaboriau99}):
\begin{enumerate}
\item a family $\Phi=\{\varphi_i\}_{i\geq 1}$ of partial automorphisms of $R$ such that for almost every $(x,y)\in R$, there exists a finite sequence  $(\varphi_1,\varphi_2,\ldots,\varphi_n)$ of elements of $\Phi\cup \Phi^{-1}$  such that $y=\varphi_n\ldots \varphi_1(x)$, 
\item a measurable subset $K$ of $R$  such that $R$ coincides with
$\bigcup_1^\infty K^n$ up to a negligible (i.e., $\mu$-null) set, where $K^n$ is the $n$-th convolution product of $K$.
\end{enumerate}

A graphing $K$ of $R$ is said to be \emph{finite} if it can be partitioned into a finite number of partial automorphisms of $R$. A type $\IIi$ equivalence relation $R$ on $(X,\mu)$ is said to be \emph{finitely generated} if it admits a finite graphing (this is equivalent to saying that  $R$ has finite cost; see \cite{Gaboriau99}).
\subsection{The first $\ell^2$-Betti number}
We sketch  the definition of the first $\ell^2$-Betti number for a
measure-preserving equivalence relation. For more details and proofs, see
the  paper of Gaboriau \cite{Gaboriau02} (in particular, Section 3.5).

Let $(X,\mu)$  be a standard probability space and  $\Sigma$ be a
measurable field of oriented 2-dimensional cellular complexes. For
$i=0,1,2$, we write $\CI[\Sigma^{(i)}]$ for the algebras of functions on the
$i$-cells of  $\Sigma$ that have uniformly finite support: a function $f :
\Sigma^{(i)}\to \CI$ is in $\CI[\Sigma^{(i)}]$ if and only if there exists
a constant $C_f$ such that for almost every $x\in X$, the number of
$i$-cells $\sigma$ of $\Sigma^x$ such that $f(\sigma)\neq 0$ is bounded by $C_f$.  The completion of $\CI[\Sigma^{(i)}]$ for the norm 
\[
\norm f_2^2= \int_X \sum_{\sigma\ i\text{-cells in }\Sigma^x}\abs{f(\sigma)}^2d\mu(x)
\] 
is a Hilbert space, which we  denote by $C_i^{(2)}(\Sigma)$. If $\Sigma$ is
uniformly locally finite, i.e., if the number of cells attached to a vertex $\sigma\in\Sigma^{(0)}$ is almost surely bounded by a constant $C$, then the natural (measurable fields of) boundary operators $\del_i : \CI[\Sigma^i]\to \CI[\Sigma^{i-1}]$ 
coming from the `attaching cells maps' extend to bounded operators
$\del_i^{(2)} : C_i^{(2)}(\Sigma)\to C_{i-1}^{(2)}(\Sigma)$. The
\textit{first reduced $\ell^2$-homology space} of $\Sigma$ is then the quotient space 
\[
\overline H_1^{(2)}(\Sigma)=\ker\del_1^{(2)}/\overline{\Im\del_2^{(2)}}
\] 
of the kernel of $\del_1^{(2)}$ by the (Hilbert) closure of the image of $\del_2^{(2)}$.  It is naturally isometric to the orthogonal complement   of $\overline{\Im\del_2^{(2)}}$  in $\ker\del_1^{(2)}$.

Now let $R$ be an ergodic equivalence relation of type $\IIi$ on $(X,\mu)$.
Consider a measurable field of oriented 2-dimensional cellular complexes
$\Sigma$  endowed with a measurable action of $R$  with fundamental domain
(see Section 2 in \cite{Gaboriau02}; a fundamental domain $D$ in $\Sigma$
is a measurable set of cells of $\Sigma$ intersecting almost each $R$-orbit
at a single cell of $D$).   Assume that $\Sigma$ is uniformly locally
finite and denote by $N$ the von Neumann algebra of $R$ (see, e.g., Section 1.5 in \cite{Gaboriau02}). The first $\ell^2$-homology space $\overline H_1^{(2)}(\Sigma)$ is then a Hilbert module and it has a Murray-von Neumann dimension over $N$. This dimension is called the first $\ell^2$-Betti number of $\Sigma$ and is denoted by $\beta_1(\Sigma,R)$.

Gaboriau \cite{Gaboriau02} has extended this definition beyond  the 
uniformly locally finite case by using an approximation
technique in the spirit of  Cheeger-Gromov \cite{CG86}.   He then proved
that \emph{the associated $\ell^2$-Betti number $\beta_1(\Sigma,R)$ is
independent  of the choice of $\Sigma$ provided that $\Sigma$ is simply
connected} (i.e., almost each fiber is simply connected). We refer to this
result as  Gaboriau's homotopy invariance theorem (it holds for all
$\ell^2$-Betti numbers; see  \cite[Th\'eor\`eme 3.13]{Gaboriau02}).  The
number $\beta_1(\Sigma, R)$ for a simply connected $\Sigma$  is called the
first $\ell^2$-Betti number of  $R$ and is denoted by $\beta_1(R)$ (note
that such a $\Sigma$ always exists---for instance, one can take the classifying space $ER$ of $R$; see Section 2.2.1 in \cite{Gaboriau02}).    

\subsection{Cost}
Let $R$ be an ergodic 
equivalence relation of type  $\IIi$ on a probability space $(X,\mu)$.  The cost of a partial automorphism $\varphi : A\to B$  of $R$ is the measure of its domain, $C(\varphi)=\mu(A)$. The cost of a graphing $\Phi=\{\varphi_i\}_{i\geq 1}$ of $R$ is defined to be
\[
C(\Phi)=\sum_{i\geq 1}C(\varphi_i)
\]
while the \emph{cost of $R$} is the infimum
\[
C(R)=\inf_{\Phi} C(\Phi)\,,
\]
where $\Phi$ runs among all graphings $\Phi$ of $R$. The \emph{cost of a countable group} $\G$ is the infimum 
\[
C(\G)=\inf_{\alpha} C(R_\alpha)
\]
where $\alpha$  runs over  all ergodic essentially free measure-preserving actions $\alpha$ of $\G$ on a probability space. This definition has been introduced by Levitt.
See \cite{KM} for an exposition.

\subsection{Isoperimetric constants}
Let $R$ be an ergodic equivalence relation of type  $\IIi$ on a probability
space $(X,\mu)$ and let $K$ be a graphing of $R$.  We now define the
isoperimetric constant $h_K(R)$ of $R$ with respect  to   $K$.  Consider
the measurable field of graphs $\Sigma=\amalg_{x\in X}\Sigma^x$ over $X$
defined as follows (see  \cite{Gaboriau02}, section 2):  the vertices of $\Sigma^x$
are the elements of $R^x$ and the set of edges of $\Sigma^x$ is the family
of pairs $\big((x, y), (x, z)\big) \in R^x \times R^x$ such that $(y, z)
\in K$.

There is an obvious action of $R$ on $\Sigma$ by permutation of fibers. In
concordance with  group theory, we shall call $\Sigma$ the \emph{Cayley graph} of $R$  associated to $K$. This is an example of a ``quasi-periodic metric space" associated to $R$ (\cite{th}). 

We write $\Sigma^{(0)}$ for the set of vertices  and  $\Sigma^{(1)}$ for the set  of edges of $\Sigma$ (thus $\Sigma^{(0)}=R$). We define vertices in $\Sigma$  as follows. This strengthens the corresponding definition of vertices in \cite{th} so as to fit our present purposes (in \cite{th} vertices of $\Sigma$ were defined to be  partially supported \emph{sections of vertices} of the map $\Sigma\to X$).

\begin{definition}\label{Points}
Let $\Sigma$ be the Cayley graph of $R$ with respect to a graphing $K$. By
a \emph{symmetric vertex of $\Sigma$} 
we mean the graph in $\Sigma^{(0)}$
of  an automorphism of the equivalence relation $R$. We shall say that two
symmetric vertices  of $\Sigma$ are  \emph{disjoint} if the corresponding graphs in
$\Sigma^{(0)}$ have $\h$-null intersection.
\end{definition}

Given a finite set $A$ of symmetric vertices of $\Sigma$, we denote by $\del_K A\subset \Sigma^{(1)}$ the  set of edges of $\Sigma$ with one vertex in $A$ and the other one outside of $A$. We endow $\Sigma^{(1)}$ with the measure $\nu^{(1)}$ defined by
\[
\nu^{(1)}(E)=\int_X \abs{(E\cap \Sigma^x)}d\mu(x)
\]
for a measurable subset $E$ of $\Sigma^{(1)}$.

\begin{definition} \label{Chcst} The \textit{isoperimetric constant} of $R$ with respect to a finite graphing $K$ is the non-negative number
\[
h_K(R)=\inf_{A} \frac{\nu^{(1)}(\del_KA) }{ \abs A},
\]
where the infimum is taken over all finite sets $A$ of  pairwise disjoint symmetric vertices of $\Sigma_K$. 
\end{definition}

The \textit{uniform isoperimetric constant} of a finitely generated equivalence relation $R$ is the non-negative number 
\[
h(R)=\inf_{K\subset R} h_K(R)
\]
where the infimum is taken over the finite graphings $K$ of $R$. Note that
by definition the uniform isoperimetric constant  of an equivalence relation is invariant under isomorphism.

\begin{remark}
For every graphing $K$ of $R$,
\[
h_K(R)\ge\int_X h(\Sigma_K^x) d\mu(x),
\] 
where $h(\Sigma_K^x)$ is the isoperimetric constant of the graph
$\Sigma_K^x$, but strict inequality may occur. The reason   is that measured equivalence relations 
 always admit graphings  having \emph{vanishing F\o lner sequences} \cite[D\'efinition 7.1]{ef} without this having any consequence 
 on their algebraic structure (compare \cite[Th\'eor\`eme 7.5]{ef}). Thus
 for any type $\IIi$ equivalence relation $R$, one gets by taking the graphing $K$ of Exemple 7.3 in \cite{ef}
 that   
\[
\int_X h(\Sigma_K^x) d\mu(x)=0
\] 
and a definition of $h(R)$ as the infimum over $K$ of $\int_X h(\Sigma_K^x)
d\mu(x)$ would lead to a trivial invariant. In fact, it is not clear either
that  our definition of $h(R)$ can achieve  non-trivial numbers.  It
involves an infimum over all possible (finite) graphings of $R$, in the
spirit of the cost, so that a proof that $h(R)$ is a non-trivial invariant
requires an homotopy-invariance type of argument. One can give a proof of that relying on Gaboriau's homotopy invariance theorem  \cite[Th\'eor\`eme 3.13]{Gaboriau02} for $L^2$ Betti numbers 
(by a straightforward adaptation of Theorem \ref{CG-groups} to equivalence relations using Definition \ref{Points}). The proof we present below (Theorem \ref{main}) relies on computations of the cost in \cite{Gaboriau99}.
\end{remark}

\begin{definition}
An ergodic equivalence relation of type $\IIi$ is called \emph{uniformly non-amenable} if $h(R)>0$.
\end{definition}

Note that by  Connes-Feldman-Weiss theorem \cite{CFW81} (amenable equivalence relations are singly generated),   a uniformly
non-amenable ergodic equivalence relation of type $\IIi$ is not amenable.

\subsection{The inequality for measured equivalence relations}

\begin{theorem}\label{main}
Let $R$ be a finitely generated ergodic  equivalence relation of type $\IIi$. Then
$$ \beta_1(R) \leq C(R)-1 \leq \frac{h(R)}{2}.$$
\end{theorem}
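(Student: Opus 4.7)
The plan is to prove the two inequalities separately. The first, $\beta_1(R) \leq C(R)-1$, is Gaboriau's theorem from \cite{Gaboriau02}, so all the work goes into the cost inequality $C(R)-1 \leq h(R)/2$. For this, I would fix a finite graphing $K$ of $R$ together with a finite set $A = \{\phi_1, \dots, \phi_n\}$ of pairwise disjoint symmetric vertices of $\Sigma_K$, and construct an explicit subgraphing $K' \subseteq K$ still generating $R$ whose cost satisfies
\[
C(K') \leq \frac{n-1}{n} + \frac{\nu^{(1)}(\partial_K A)}{2n}.
\]
Since $C(R) \leq C(K')$, subtracting $1$ and taking infima first over $A$ and then over finite graphings $K$ would then yield $C(R)-1 \leq h(R)/2$.

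The construction of $K'$ is to remove from $K$ the redundant (cycle) edges lying inside $A$. Concretely, since the finite graphing $K$ is a finite union of partial automorphisms, its edges carry a measurable total order. Running Kruskal's algorithm fiberwise on the induced subgraph of $\Sigma_K^x$ at the vertex set $A^x = \{\phi_1(x), \dots, \phi_n(x)\}$ produces, measurably in $x$, a spanning forest $T^x$ with at most $n-1$ edges. Setting $K^{\mathrm{cyc}}$ to be the set of edges of $K$ with both endpoints in $A$ but not in the spanning forest $T = \bigcup_x T^x$, I put $K' := K \setminus K^{\mathrm{cyc}}$. The new $K'$ still generates $R$ because any removed edge $\{u,v\} \in K^{\mathrm{cyc}}$ can be rerouted through the $T^x$-path joining $u$ and $v$ in their common component of the spanning forest.

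The cost estimate is then a handshake argument. By the $\mu$-invariance of each $\phi_i \in [R]$ (after substituting $y = \phi_i(x)$ in the integral),
\[
\sum_{i=1}^{n} \int_X |K'^{\phi_i(x)}|\, d\mu(x) = n\, \h(K') = 2n\, C(K').
\]
On the other hand, applying the handshake lemma fiberwise and integrating, the same quantity equals $2\int_X |T^x|\, d\mu(x) + \nu^{(1)}(\partial_K A)$, using that the interior edges of $K'$ on $A^x$ are exactly those of the spanning forest $T^x$ and that $\partial_{K'} A = \partial_K A$ since only interior edges were removed. Together with $|T^x| \leq n-1$ this gives $2n\, C(K') \leq 2(n-1) + \nu^{(1)}(\partial_K A)$, the claimed inequality.

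The main obstacle will be the measurable selection of the spanning forests $T^x$ and the verification that the resulting $K'$ is indeed a graphing of $R$; both are standard in measured graph theory (Kruskal on a measurable ordering, respectively the redundant-cycle-edge rerouting argument), but they require care. This cycle-removal construction aligns with the cost computations of \cite{Gaboriau99} cited just before the theorem; a more conceptual alternative, in the spirit of Theorem \ref{rsf}, would be to average over a random invariant foresting of $\Sigma_K$.
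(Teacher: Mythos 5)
Your overall strategy and the final handshake computation coincide with the paper's, but the step where you produce the forest is exactly where the two arguments part ways, and your version has a genuine gap. The spanning forests $T^x$ are computed separately in each fibre, whereas the set $K^{\mathrm{cyc}}$ you delete must be a single measurable subset of $R$: a pair $(y,z)\in K$ has both endpoints in $A^x$ for up to $n$ different base points $x$ of the same class (namely $x\in\{\phi_i^{-1}(y)\}_i\cap\{\phi_j^{-1}(z)\}_j$), and Kruskal's algorithm, run on the \emph{different} induced subgraphs on $A^x$ and on $A^{x'}$, can accept the edge in one fibre and reject it in the other. Under your stated convention ($K'$ keeps an edge iff it lies in \emph{some} $T^{x'}$), connectivity of $K'$ is preserved, but the interior edges of $K'$ on $A^x$ are then $\bigcup_{x'}\bigl(T^{x'}\cap E(A^x)\bigr)$, which can strictly contain $T^x$ and have up to order $n^2$ elements rather than $n-1$; this destroys the estimate $2nC(K')\le 2(n-1)+\nu^{(1)}(\partial_KA)$. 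Under the opposite convention (delete an edge as soon as one fibre rejects it), the rerouting argument fails, because the $T^x$-path you reroute through may itself contain deleted edges. This is not a routine measurable-selection issue: a globally consistent measurable spanning forest of each class inside $K$ cannot exist in general, since it would make every finitely generated relation treeable.

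The paper resolves precisely this point by importing a nontrivial theorem of Kechris--Miller and of Pichot: every graphing $K$ of $R$ contains a measurable subtreeing $F\subseteq K$, generating a (possibly proper) subrelation $S$, with $C(F)=C(S)\ge C(R)$. With such a \emph{global} $F$, the restriction to every window $A^x$ is automatically a forest with at most $|A^x|$ edges, and the paper then runs your degree count on $F$. Note the change of logic this forces: since $F$ need not generate $R$, one cannot conclude $C(R)\le C(F)$ as you do with $K'$; instead one bounds $2C(F)$ from above and uses the inequality $C(F)\ge C(R)$ supplied by the cited theorem. Your argument becomes correct once the fibrewise Kruskal step is replaced by this subtreeing theorem; the same external input is what would be needed to make your suggested ``random invariant foresting'' alternative work.
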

\begin{proof}
The first inequality is due to Gaboriau \cite[Corollaire 3.22]{Gaboriau02} so we concentrate
on the second one. As  was shown  by  the second-named author (see
\cite{th}) and (simultaneously) by Kechris and Miller  \cite{KM}, the
equivalence relation $R$ contains subtreeings of cost at least $C(R)$. More
precisely, by Corollaire 39, p 25 in  \cite{th} , for any given graphing $K$ of $R$, there exists a treeing
$F\subset K$ generating an equivalence subrelation $S$ of $R$ with
$C(F)=C(S)\geq C(R)$. Recall that a treeing is a graphing  with no simple cycle (see \cite[Definition I.2]{Gaboriau99}). 
To prove the  theorem, we thus are left to show that 
\[
2C(F) \leq 2+h(R).
\] 
Let $A$ be a finite set of pairwise disjoint symmetric vertices  in $\Sigma$ in the sense of  Definition \ref{Points}, so elements of $A$ are (graphs of) automorphisms of $R$. For $x\in X$, let $\Sigma_F^x$ denote the family of subtrees of $\Sigma_K^x$ associated to $F$. 
  Since  $C(F)=\frac{1}{2}\int_{x \in X} \degr_F(x)d \mu(x)$,  we have
\[
2C(F)=\frac{1}{\abs{A}}\sum_{\varphi \in A} \int_{x \in X} \degr_F(\varphi(x))d
\mu(x)=\frac{1}{\abs{A}} \int_{x\in X} \degr_{F}(A^x)d\mu(x),
\]
where $\degr_{F}(A^x)$ is the total degree of the points of $A^x$ in $\Sigma_F^x$.
Now as  $\Sigma_F^x$ is a family of trees, we get  
\[
\degr_{F}(A^x)= 2\abs{E^x_A}+\abs{\del_{F}A^x} \leq 2\abs{A^x}+\abs{\del_{F}A^x}
\leq 2\abs{A}+\abs{\del_{K}A^x},
\]
where $E_A^x$ is the set of the edges in $\Sigma_F^x$ with both vertices in $A^x$ and $\del_{F}A^x$ the  edges in $\Sigma_F^x$ with exactly one vertex in $A^x$.
Hence 
\[
2C(F)\leq 2+\frac{1}{\abs{A}} \int_{x\in X} \abs{\del_{K}A^x}d\mu(x)
=
2+\frac{\nu^{(1)}(\del_KA) }{ \abs A}
.
\]
Taking the infimum over $A$ and then over $K$, we get $2C(F)\leq 2+h(R)$, as desired.
\end{proof}

\begin{remark}
The proof of Section \ref{Cheeger} can also be adapted to give the inequality $\beta_1(R) \leq h(R)$, and in fact it gives the same inequality for any  $r$-discrete measured groupoid  \cite{AnanRen}.
\end{remark}

\subsection{Simultaneous vanishing of the invariants}
Theorem \ref{main} shows in particular that $h(R)=0$ implies $C(R)=1$. We now prove the converse.

\begin{proposition}\label{hzero}
Let $R$ be an ergodic equivalence relation with cost 1. Then $h(R)=0$. 
\end{proposition}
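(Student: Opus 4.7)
The plan is to construct, for any $\delta>0$, a finite graphing of $R$ of the form $K=\{T\}\cup K'$ where $T\in[R]$ is ergodic and $C(K')$ is arbitrarily small, and then to witness $h_K(R)<\delta$ by taking the symmetric vertices $A=\{T^0,T^1,\dots,T^{N-1}\}$ for a large $N$.

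First I would pick an ergodic (hence aperiodic) element $T\in[R]$; such a $T$ exists since $R$ is ergodic of type $\IIi$. Then I would exploit $C(R)=1$ to produce the decomposition as follows. Apply the Rokhlin lemma to $T$ to find a Borel set $B\subset X$ of measure $\mu(B)\le 1/M$ whose iterates $B,TB,\dots,T^{M-1}B$ are pairwise disjoint. Gaboriau's formula for the cost of an induced relation, $\mu(B)(C(R|_B)-1)=C(R)-1$ (see \cite{Gaboriau99}), combined with $C(R)=1$, forces $C(R|_B)=1$. Since $R$ is finitely generated, so is $R|_B$, and one can select a finite graphing $\Psi_B$ of $R|_B$ with unnormalized cost $C(\Psi_B)<2/M$. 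Take $K=\{T\}\cup\Psi_B$, i.e.\ $K'=\Psi_B$. Ergodicity of $T$ makes $T$-orbits recur to $B$, and this is exactly what is needed to see that $K$ generates $R$: almost every $(x,y)\in R$ can be connected by bringing $x$ and $y$ into $B$ with $T$-iterates and then linking the images by a $\Psi_B$-word.

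Next I would take $A=\{T^0,T^1,\dots,T^{N-1}\}$, which gives $N$ pairwise disjoint symmetric vertices by aperiodicity of $T$. In the fiber $\Sigma_K^x$, the only $T$-edges of $A^x=\{x,Tx,\dots,T^{N-1}x\}$ crossing the boundary are the two at the endpoints $x$ and $T^{N-1}x$, while every other boundary edge is a $\Psi_B$-edge at some $T^kx$. Hence
$$
|\del_K A^x|\le 2+\sum_{k=0}^{N-1}\degr_{\Psi_B}(T^kx).
$$
Integrating against $\mu$, the $T$-invariance of $\mu$ and the identity $\int_X\degr_{\Psi_B}\,d\mu=2C(\Psi_B)$ yield $\nu^{(1)}(\del_K A)\le 2+2NC(\Psi_B)<2+4N/M$, so $h_K(R)\le 2/N+4/M$. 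Choosing $N$ and $M$ both larger than $8/\delta$ makes this less than $\delta$, and since $\delta>0$ is arbitrary this gives $h(R)=0$.

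The main obstacle is the construction of $\Psi_B$: one must use the hypothesis $C(R)=1$ to obtain a finite graphing of $R|_B$ with (unnormalized) cost of order $\mu(B)$. This passes through Gaboriau's hereditary cost formula, which is the only place the hypothesis enters in an essential way. The Rokhlin lemma is used twice: to produce the small set $B$ and to guarantee, via recurrence, that $T$ together with $\Psi_B$ really does generate $R$. Once these are in place the boundary count is a short calculation.
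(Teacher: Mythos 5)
Your argument is correct, but it takes a genuinely different route from the paper's. The paper invokes Lemma III.5 of \cite{Gaboriau99} directly to produce, for each $\varepsilon>0$, a graphing $(\varphi,\psi_\varepsilon)$ of $R$ with $\varphi\in[R]$ ergodic and $\psi_\varepsilon$ a single partial automorphism of cost $\varepsilon$; it then uses the Rokhlin lemma to conjugate the domain and image of $\psi_\varepsilon$ (inside the relation generated by $\varphi$) into the base of a tall tower, so that each orbit segment $\{x,\varphi(x),\dots,\varphi^n(x)\}$ meets $\dom\psi$ and $\Im\psi$ at most once each and the fiberwise boundary of $A_n=\{\varphi^i\}_{i=0}^n$ is bounded \emph{pointwise} by $4$. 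You instead localize the cheap part of the graphing by hand, restricting to a small set $B$ and using Gaboriau's compression formula $C(R|_B)-1=(C(R)-1)/\mu(B)$ to obtain a finite graphing $\Psi_B$ of $R|_B$ of small unnormalized cost, and then you bound the boundary of $\{T^k\}_{k<N}$ \emph{in expectation} rather than pointwise, using only the $T$-invariance of $\mu$ and the identity $\int\degr_{\Psi_B}\,d\mu=2C(\Psi_B)$. This averaging step (the same device as in the proof of Theorem \ref{main}) is what lets you dispense with the conjugation argument entirely; in fact your appeal to the Rokhlin lemma is superfluous, since any positive-measure set $B$ with $\mu(B)\le 1/M$ works: neither the generation claim (which needs only recurrence of $T$-orbits to $B$, i.e.\ ergodicity) nor the boundary estimate uses disjointness of the iterates of $B$. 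Two small points you should make explicit: (i) passing from $C(R|_B)=1$ (normalized) to the existence of a \emph{finite} graphing of $R|_B$ of unnormalized cost $<2/M$ requires the standard fact that for a finite-cost ergodic relation the cost is approached by finite graphings --- for instance, apply Lemma III.5 of \cite{Gaboriau99} to the ergodic relation $R|_B$ itself, which is essentially what the paper does at the level of $R$; (ii) the symmetric vertices $T^0,\dots,T^{N-1}$ are pairwise disjoint because an ergodic automorphism of a non-atomic space is aperiodic. With these remarks supplied, your proof is complete and arguably cleaner than the conjugation argument in the text.
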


\begin{proof}
Let $R$ be an ergodic equivalence relation with cost 1. Let $\varphi$ be an
ergodic automorphism of $R$ and for each real number $\varepsilon >0$, let
$\psi_\varepsilon$ be a partial automorphism of $R$ of cost $\varepsilon$
such that $$\Phi_\varepsilon=(\varphi,\psi_\varepsilon)$$ is a graphing of
$R$. The existence of such graphings of $R$ is proved in \cite[Lemma III.5]{Gaboriau99}.
Denote by $R_\varphi$ the equivalence relation generated by $\varphi$ and
fix  $n\in \NI$. By Rokhlin's Lemma there exists a family
$B_1,\ldots,B_{n}$ of disjoint subsets of $X$ such that 
\[
\varphi(B_i)=B_{i+1}~~\text{for}~~i=1,\ldots, n-1~~\text{and}~~
\mu\big(X\backslash \bigcup_1^{n} B_i\big)\leq 1/4.
\]
Suppose that  $\varepsilon<\mu(B_1)$ and consider two partial automorphisms $\theta_{\varepsilon,1}$ and $\theta_{\varepsilon,2}$ of the equivalence relation $R_\varphi$ such that $\theta_{\varepsilon,1}(\dom \psi_\varepsilon)\subset B_1$ and $\theta_{\varepsilon,2}(\Im \psi_\varepsilon)\subset B_1$. Set 
\[
\psi'_\varepsilon=\theta_{\varepsilon,2}\psi_\varepsilon\theta_{\varepsilon,1}^{-1}.
\]
It is not hard to check that $\Phi_\varepsilon'=(\varphi,\psi_\varepsilon')$ is a graphing of $R$. Now letting $n\to \infty$ we obtain that
 for any integer $n\in \NI$ and any $\varepsilon>0$ there exists a graphing $\Phi_{\varepsilon,n}$ of $R$ of the form $\Phi_{\varepsilon,n}=(\varphi,\psi_{\varepsilon,n})$ whose  cost is less than $1+\varepsilon$  and such that for almost every  $x\in X$, the intersection of either the domain or the image of $\psi_{\varepsilon,n}$ with the finite set 
 \[
 \{x,\varphi(x),\ldots,\varphi^n(x)\}
 \]
 is at most one point 
  (this  should be compared to  the fact that the ``concentration of
  measure" property  fails for   automorphisms of standard probability
  spaces by Rokhlin's Lemma; see \cite{ef}). Let us now consider  the Cayley graph $\Sigma_{\varepsilon,n}$ associated to $\Phi_{\varepsilon,n}$ as in Section \ref{CheegerConstant}. Let $A_n$ be the set of (pairwise disjoint) symmetric points  of $\Sigma_{\varepsilon,n}$ given by $A_n=\{\varphi^i\}_{i=0}^n$.  Then the boundary of $A_n^x$ in $\Sigma_{\varepsilon,n}^x$ consists of  at most 4 points for almost every $x\in X$. 
 It follows that $h(R)=0$. 
 
\end{proof}

\section{Some consequences of the main inequality}
\subsection{On the Day-von Neumann problem for uniformly non-amenable equivalence relations}\label{vN}
The question of the existence of non-abelian free groups in non-amenable
groups is often referred to as the (Day-)von Neumann problem. It was solved
negatively by Ol'shanskii in 1980;  Adian (1982) showed that the free
Burnside groups with large (odd) exponent are non-amenable. Since they do
not contain free groups, this gave another negative solution to the Day-von
Neumann problem.
Osin \cite{Osin2}  extended Adian's result to show that these Burnside groups
are uniformly non-amenable (and even that the
regular representation has non-vanishing uniform Kazhdan's constant),
whence he deduced the existence of
finitely generated groups that are uniformly non-amenable and do not
contain any free group on two generators \cite[Theorem 1.3]{Osin2}. 

The Day-von Neumann problem is an open question for measured equivalence relations. It can be formulated in the following way. 
Let $(X,\mu)$ be a standard probability space and $R$ be a non-amenable
ergodic equivalence relation of type $\IIi$ on $(X,\mu)$. Is it true that
$R$ contains a non-amenable subtreeing?  The result of Kechris-Miller and
of the second author recalled in the proof of Theorem \ref{main} shows that
every ergodic equivalence relation of type $\IIi$ with cost greater than 1
(and thus non-amenable) contains a non-amenable subtreeing. Combining this
with Proposition \ref{hzero}, we get the following result.

\begin{corollary}[See \cite{Osin2} for the group case]
Let $R$ be a uniformly non-amenable ergodic equivalence relation of type $\IIi$. Then $R$ contains  a non-amenable subtreeing.
\end{corollary}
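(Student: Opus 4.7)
The plan is to chain together Proposition \ref{hzero} with the Kechris--Miller/Pichot treeing result that was already invoked in the proof of Theorem \ref{main}. First, I would take the contrapositive of Proposition \ref{hzero}: if $R$ is uniformly non-amenable, i.e.\ $h(R)>0$, then necessarily $C(R)>1$. This is the only place where the hypothesis enters; everything afterwards is a statement about equivalence relations of cost strictly greater than $1$.

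Next, I would invoke the subtreeing result recalled in the proof of Theorem \ref{main} (Corollaire 39 in \cite{th}, or the corresponding statement in \cite{KM}): for any graphing $K$ of $R$, there exists a subtreeing $F\subset K$ whose generated equivalence subrelation $S$ satisfies $C(S)=C(F)\geq C(R)$. Applying this with any fixed finite graphing $K$ of $R$, I obtain a subtreeing $F$ of $R$ generating a subrelation $S$ with $C(S)\geq C(R)>1$.

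Finally, I would observe that $S$ cannot be amenable: by the Connes--Feldman--Weiss theorem, an amenable ergodic equivalence relation of type $\IIi$ is hyperfinite and in particular is generated by a single automorphism, so has cost $\leq 1$. Since $C(S)>1$, the subtreeing $F$ is non-amenable, which is the required conclusion. (One should also record that an ergodic component of $S$ of positive measure inherits cost $>1$, so one may assume $S$ is ergodic if desired, but this is not strictly necessary for the statement.)

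There is no real obstacle here: all the heavy lifting has been done in Proposition \ref{hzero} (where Rokhlin's lemma is used to show that cost $1$ forces $h=0$) and in the subtreeing theorem of \cite{KM,th}. The corollary is essentially a two-line bookkeeping argument; the only point requiring a moment of care is verifying that cost $>1$ rules out amenability, which is immediate from Connes--Feldman--Weiss.
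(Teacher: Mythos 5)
Your proposal is correct and follows essentially the same route as the paper: combine the contrapositive of Proposition \ref{hzero} (so $h(R)>0$ forces $C(R)>1$, since cost is always $\geq 1$) with the Kechris--Miller/Pichot subtreeing theorem already used in Theorem \ref{main}. The paper cites that theorem directly in the form ``cost $>1$ implies the existence of a non-amenable subtreeing,'' whereas you unpack it slightly (treeing $F$ with $C(F)\geq C(R)>1$, then non-amenability via Connes--Feldman--Weiss), but this is the same argument.
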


\subsection{Fundamental groups}\label{Fund}

Let $(X,\mu)$ be a probability space and $R$ be an ergodic equivalence relation of type $\IIi$ on $(X,\mu)$. The so-called \textit{fundamental group} of $R$ is the multiplicative subgroup of $\RI_+^*$ generated by the measure of all Borel subsets $Y$ of $X$ such that the restricted equivalence relation $R_{|Y}$ is isomorphic to $R$.

The next corollary follows from Proposition \ref{hzero} and the fact that equivalence relations with non-trivial cost have trivial fundamental group, which is proved in \cite{Gaboriau99}, Proposition II.6.  
\begin{corollary}
A finitely generated ergodic equivalence relation of type $\IIi$ that is uniformly non-amenable has a trivial fundamental group.  
\end{corollary}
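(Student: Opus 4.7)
The plan is to combine Proposition \ref{hzero} with Gaboriau's result on the behavior of cost under restriction \cite[Proposition II.6]{Gaboriau99}. The hypothesis of uniform non-amenability gives $h(R) > 0$, and the goal is to convert this into an obstruction on the fundamental group via the cost.

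First I would invoke the contrapositive of Proposition \ref{hzero}: since $h(R) > 0$ and the proposition states that $C(R) = 1$ forces $h(R) = 0$, we must have $C(R) \neq 1$. Because the cost of an ergodic type $\IIi$ equivalence relation always satisfies $C(R) \geq 1$, this rules out the equality case and yields the strict inequality $C(R) > 1$. Note that finite generation is exactly what ensures $C(R) < \infty$, so the cost is a finite number strictly greater than $1$.

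Next I would appeal to Gaboriau's scaling formula for cost under restriction: for an ergodic type $\IIi$ equivalence relation $R$ and a Borel set $Y \subset X$ of positive measure, the cost of the restricted relation $R_{|Y}$ is related to $C(R)$ by a rescaling in which $C(R) - 1$ scales as $1/\mu(Y)$. Consequently, if $Y$ is such that $R_{|Y} \cong R$, then $C(R_{|Y}) = C(R)$, and the scaling identity forces $\mu(Y) = 1$ as soon as $C(R) - 1 \neq 0$. This is precisely the content of \cite[Proposition II.6]{Gaboriau99}, which states that a relation of cost strictly greater than $1$ has trivial fundamental group. Applying it to our $R$ finishes the argument.

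There is no real obstacle here: the work has already been done in Proposition \ref{hzero} (which bridges $h(R)$ and $C(R)$) and in \cite{Gaboriau99} (which bridges $C(R)$ and the fundamental group). The only thing to verify carefully is that uniform non-amenability in our sense is strong enough to invoke Proposition \ref{hzero}, i.e.\ that $h(R) > 0$ strictly rules out $C(R) = 1$; this is immediate from the contrapositive. The corollary is thus a short two-step deduction chaining the two results.
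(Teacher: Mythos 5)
Your proposal is correct and follows exactly the paper's route: the contrapositive of Proposition \ref{hzero} gives $C(R)>1$ (finite by finite generation), and Gaboriau's Proposition II.6 from \cite{Gaboriau99} then yields triviality of the fundamental group. Nothing essential differs from the paper's own two-line argument.
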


This corollary can also be  proved directly by using the following compression formula.

\begin{proposition}
 Let $R$ be a finitely generated ergodic equivalence relation on $(X,\mu)$ and $Y\subset X$ be a non-negligible measurable subset of $X$. Let $S$ be the restriction of $R$ to $Y$. Then 
\[
h(R)\leq \mu(Y)h(S).
\]
\end{proposition}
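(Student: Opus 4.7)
\textit{Plan.}
The plan is to convert a near-optimal finite graphing $K_S$ of $S$ together with a finite pairwise disjoint family $A_S=\{\psi_1,\ldots,\psi_m\}$ of symmetric vertices of $\Sigma_{K_S}$ into corresponding data $(K_R,A_R)$ for $R$ whose isoperimetric ratio is at most $\mu(Y)$ times that of $(K_S,A_S)$; taking infima over $A_S$ and $K_S$ then yields $h(R)\le\mu(Y)h(S)$.

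The main case, which contains all the essential ideas, is when $N:=1/\mu(Y)$ is a positive integer. Using ergodicity of the type $\IIi$ relation $R$, choose a measurable partition $X=A_0\sqcup\cdots\sqcup A_{N-1}$ with $A_0=Y$ and $\mu(A_i)=\mu(Y)$, together with partial automorphisms $\phi_i$ of $R$ realizing measure-preserving bijections $A_0\to A_i$ for $i=1,\ldots,N-1$ (set $\phi_0:=\id_{A_0}$). Put $K_R^0:=\bigcup_{i=1}^{N-1}(\mathrm{graph}(\phi_i)\cup\mathrm{graph}(\phi_i^{-1}))$ and $K_R:=K_S\sqcup K_R^0$, a finite graphing of $R$. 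For each $k$ and each $s\in\mathbb{Z}/N$, define the extension $\tilde\psi_{k,s}\in[R]$ by
\[
\tilde\psi_{k,s}(x):=\phi_{(i+s)\bmod N}\bigl(\psi_k(\phi_i^{-1}(x))\bigr)\quad\text{for }x\in A_i,
\]
so that $\tilde\psi_{k,s}$ acts as $\psi_k$ on the $A_0$-copy and cyclically shifts layer indices by $s$; the family $A_R:=\{\tilde\psi_{k,s}\}_{k,s}$ consists of $mN$ pairwise disjoint symmetric vertices of $\Sigma_{K_R}$. The crux is the \emph{cyclic-shift covariance} $A_R^x\cap A_j=\phi_j(A_R^x\cap A_0)$ for every $x\in X$ and $j\in\mathbb{Z}/N$, which forces every edge $(y,\phi_j(y))\in K_R^0$ to have either both or neither endpoint in $A_R^x$, hence $\partial_{K_R^0}A_R=\emptyset$. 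Under the identification of $A_0\cap[x]$ with the $S$-orbit of $y_0:=\phi_{i(x)}^{-1}(x)$, the set $A_R^x\cap A_0$ corresponds to the fiber $A_S^{y_0}$, so
\[
\nu^{(1)}_R(\partial_{K_R}A_R)=N\int_Y|\partial_{K_S}A_S^y|\,d\mu(y)=N\mu(Y)\,\nu^{(1)}_S(\partial_{K_S}A_S)=\nu^{(1)}_S(\partial_{K_S}A_S),
\]
using $N\mu(Y)=1$. Dividing by $|A_R|=mN$ produces the ratio $\mu(Y)\cdot\nu^{(1)}_S(\partial_{K_S}A_S)/m$, from which the integer case follows.

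For rational $\mu(Y)=p/q$ the same construction applies after first refining $Y$ into $p$ sub-pieces of measure $1/q$ and building $q-p$ additional layers of the same measure, with the cyclic shift acting across the $q-p+1$ layer indices. The main obstacle is the case of irrational $\mu(Y)$: this is handled by approximating $\mu(Y)$ by rationals $\alpha_n\nearrow\mu(Y)$ together with subsets $Y_n\subset Y$ of measure $\alpha_n$, applying the rational case to obtain $h(R)\le\alpha_n h(R|_{Y_n})$, and passing to the limit. An alternative is to adapt the cyclic-shift construction directly by incorporating a residual layer $A_N$ of measure $\eta=1-\lfloor 1/\mu(Y)\rfloor\mu(Y)<\mu(Y)$, which introduces an additive error $O(\eta)$ in the ratio; this is then eliminated by iteratively refining the base so that $\eta\to 0$.
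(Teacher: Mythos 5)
Your construction in the case $\mu(Y)=1/N$ with $N$ an integer is correct and is genuinely different from the paper's: you tile $X$ by $N$ congruent copies of $Y$, extend each $\psi_k$ to $N$ vertices $\tilde\psi_{k,s}$ that are covariant under the layer shift, and the covariance $A_R^x\cap A_j=\phi_j(A_R^x\cap A_0)$ does indeed kill all boundary coming from the new generators, giving the exact identity $\nu^{(1)}_R(\del_{K_R}A_R)/\abs{A_R}=\mu(Y)\,\nu^{(1)}_S(\del_{K_S}A_S)/\abs{A_S}$ with no error term. The paper proceeds differently: it keeps $Y$ intact, partitions $X\setminus Y$ into $n$ small pieces arranged in a $\ZI/n\ZI$-cycle attached to $Y$ along one small set $Z$, extends $\psi_j$ to $\psi_j'=\varphi^j\amalg\psi_j$, and absorbs the resulting error terms ($k\delta$ from the attaching map and $3\mu(X\setminus Y)$ from the cycle) by taking $n$ large and, crucially, $\abs A$ large via the ``repetition of patterns'' property. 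The paper's route is less clean but works for arbitrary $\mu(Y)$ in one stroke.

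The gap in your proposal is precisely the passage from $\mu(Y)=1/N$ to general $\mu(Y)$. First, the rational case as described does not go through: if you refine $Y$ into $p$ pieces of measure $1/q$, the automorphisms $\psi_k$ of $S$ do not preserve the pieces, so the formula $\tilde\psi_{k,s}(x)=\phi_{(i+s)}(\psi_k(\phi_i^{-1}(x)))$ no longer makes sense layer by layer and the cyclic-shift covariance (hence the vanishing of the boundary on the new generators) is lost; while if you instead keep $Y$ as a single layer of measure $p/q$ next to $q-p$ layers of measure $1/q$, there is no measure-preserving cyclic shift among layers of unequal measure. Second, the reduction of the irrational case to the rational one by $Y_n\subset Y$, $\mu(Y_n)=\alpha_n\nearrow\mu(Y)$, requires $h(R|_{Y_n})\to h(S)$ \emph{from above}; but the only control available is the compression inequality itself, which gives $h(S)\le\frac{\alpha_n}{\mu(Y)}h(S|_{Y_n})$, i.e.\ a lower bound $h(R|_{Y_n})\ge h(S)$ --- the wrong direction. (The paper explicitly states it does not know whether the compression formula is an equality, so no upper semicontinuity under shrinking $Y$ is available.) Your ``alternative'' with a residual layer of measure $\eta=1-\lfloor 1/\mu(Y)\rfloor\mu(Y)$ has the same problem: $\eta$ is a fixed positive number determined by $\mu(Y)$, and ``iteratively refining the base'' does not send it to $0$; to make the residual contribution negligible one is forced back to something like the paper's device of a long cycle of $n$ thin pieces together with a lower bound on $\abs A$. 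So the proof is complete only for $\mu(Y)\in\{1/N:N\in\NI\}$, and the general case needs a different argument.
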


\begin{proof}

Note that   (by definition)   $h(S)$ should be computed  with respect to
the normalized measure $\mu_1=\frac{\mu}{ \mu(Y)}$.   
Let us prove this inequality (we do not know whether it is an equality). 

Fix $\varepsilon\in (0,\mu(Y))$. Let $K$ be a (finite) graphing of $S$ such that $h_K(S)\leq h(S)+\varepsilon/4$ and let $A$ be a finite set of pairwise distinct vertices of  $\Sigma_K$ such that  
\[
 \frac{\nu_1^{(1)}(\del_KA) }{ \abs A}< h_K(S) +\varepsilon/4
\] 
and $\abs  A>12/\varepsilon$ (that one can choose $A$ arbitrary large follows by \emph{quasi-periodicity}; see the theorem of ``repetition of patterns" on p.~56 of \cite{th}), 
where $\nu_1^{(1)}$ is the counting measure on
$\Sigma_K^{(1)}$ associated to $\mu_1$. Write  $A=\{\psi_1,\ldots,
\psi_k\}$, where $\psi_j\in [S]$, $j=1,\ldots, k$. 

Let $n$ be an integer greater than $\max \{k, 4/\varepsilon\}$ and
$\{Y_i\}_{i\in \ZI/n\ZI}$ be a partition of $X\backslash Y$ into $n$
measurable subsets of equal measure $\delta<\varepsilon/4$. Choose
$Z\subset Y$ such that $\mu(Z)=\delta$ and consider partial isomorphisms 
\[
\theta : Z \to Y_0
\]
and 
\[
\varphi_{i} :Y_{i}\to Y_{i+1}, \ i \in \ZI/n\ZI, 
\]
whose graphs are included in $R$ and such that the automorphism $\varphi=\amalg_{i\in \ZI/n\ZI} \varphi_i$ induces an action of $\ZI/n\ZI$ on $X\backslash Y$.  Then 
\[
K'=K \cup\{\theta\}\cup \{\varphi\}
\] 
is a graphing of $R$. Denote by $\Sigma_K$ the  Cayley graph of $S$
associated to $K$ and $\Sigma_{K'}$ the  Cayley graph of $R$ associated to
$K'$.   For $j=1,\ldots, k$, consider the automorphism of $X$ defined by 
\[
\psi_j'=\varphi^{j}\amalg\psi_j
\] 
and observe that the graphs of $\psi'_j$, ${j=1,\ldots, k}$, are pairwise
disjoint. Set $A'=\{\psi_j'\}_{j=1,\ldots, k}$.
 Then  for $y\in Y$, one has 
\[
\abs{ (\del_{K'}A')^y} \leq \abs{(\del_KA)^y}+ \abs{\{\psi\in A \,;\; \psi(y)\in
Z\}},
\]
while for $y\in X\backslash Y$, one has $\abs{ (\del_{K'}A')^y} \leq 3$. Thus
 \[
\nu^{(1)}(\del_{K'}A') \leq \mu(Y)\nu_1^{(1)}(\del_KA) +k\delta
+3\mu(X\backslash Y),
\]
 where $\nu^{(1)}$ is the counting measure on $\Sigma^{(1)}$ associated to $\mu$. It follows that 
 \begin{align*}
 h(R)&\leq \frac{\nu^{(1)}(\del_{K'}A') }{ \abs{A'}}\\
 &\leq \mu(Y)(h(S) +\varepsilon/2)+\varepsilon/2\\ 
& \leq \mu(Y)h(S) +\varepsilon.
 \end{align*}
 So $h(R)\leq \mu(Y) h(S)$ as required. 
\end{proof}

\subsection{Ergodic isoperimetric constant of countable groups}\label{ErgodicCheeger}

Let $\G$ be a finitely generated countable group. We define the
\emph{ergodic  isoperimetric constant}  of $\G$ by the expression
\[
h_e(\G)=\inf_{\alpha} h(R_\alpha),
\]
where the infimum is taken over all ergodic essentially free
measure-preserving actions $\alpha$ of $\G$ on a probability space,
$R_\alpha$ is the partition into the orbits of $\alpha$, and
$h(R_\alpha)$ is the uniform isoperimetric constant of $R_\alpha$. The
following proposition gives another proof of the optimal inequality in the
group case (see Section \ref{perco}).

\begin{proposition}\label{h-actions}
Let $\G$ be an infinite countable group and $\alpha$ be an ergodic essentially free
measure-preserving action of $\G$ on a probability space $(X,\mu)$. Let
$R_\alpha$ be the orbit partition of $X$ into the orbits of $\alpha$. Then
$2\beta_1(\G)\leq 2C(\G)-2 \leq h(R_\alpha)\leq h(\G)$. 
\end{proposition}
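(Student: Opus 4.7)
The plan is to split the chain into three inequalities: $2\beta_1(\G) \le 2C(\G)-2$, then $2C(\G)-2 \le h(R_\alpha)$, and finally $h(R_\alpha) \le h(\G)$. The first two are routine consequences of results already stated in the excerpt; the last one is the substantive step and encodes the comparison between the group and its orbit equivalence relation.

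For $\beta_1(\G) \le C(\G)-1$, I would invoke Gaboriau's orbit-equivalence invariance of $\ell^2$-Betti numbers, which gives $\beta_1(\G) = \beta_1(R_\alpha)$ for every essentially free measure-preserving action $\alpha$. Combined with the already-quoted inequality $\beta_1(R) \le C(R)-1$ (\cite[Corollaire 3.22]{Gaboriau02}), this yields $\beta_1(\G) \le C(R_\alpha)-1$ for each such $\alpha$; taking infimum over $\alpha$ gives the claim. For $2C(\G)-2 \le h(R_\alpha)$, Theorem \ref{main} gives $2C(R_\alpha)-2 \le h(R_\alpha)$, and since $C(\G) = \inf_\alpha C(R_\alpha) \le C(R_\alpha)$, one immediately obtains the inequality.

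The main work is in the last inequality $h(R_\alpha) \le h(\G)$. Given a finite generating set $S$ of $\G$, I would construct the finite graphing
\[
K_S = \bigcup_{s \in S} \mathrm{graph}(\alpha(s))
\]
of $R_\alpha$. To each finite subset $A \subset \G$ I would associate the family $A_{\mathrm{sym}} = \{\alpha(g) : g \in A\}$ of symmetric vertices of $\Sigma_{K_S}$, in the sense of Definition \ref{Points}. The essential freeness of $\alpha$ ensures pairwise disjointness: if $g_1 \ne g_2$, then $\mathrm{graph}(\alpha(g_1)) \cap \mathrm{graph}(\alpha(g_2))$ is contained in the fixed-point set of $\alpha(g_1^{-1}g_2)$, which is $\mu$-null. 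In each fiber $\Sigma_{K_S}^x$ over a generic $x$, the bijection $g \mapsto (x, \alpha(g)x)$ identifies the Cayley graph of $\G$ with respect to $S$ with $\Sigma_{K_S}^x$, carrying $A$ onto $A_{\mathrm{sym}}^x$ and the edge boundary onto the edge boundary. Integration over $X$ then gives $\nu^{(1)}(\partial_{K_S} A_{\mathrm{sym}}) = |\partial_S A|$ and $|A_{\mathrm{sym}}| = |A|$, so that $h_{K_S}(R_\alpha) \le |\partial_S A|/|A|$. Taking the infimum over $A$ gives $h_{K_S}(R_\alpha) \le h_S(\G)$, and a further infimum over $S$ delivers $h(R_\alpha) \le h(\G)$.

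The principal obstacle is a conceptual rather than technical one: one must carefully set up the correspondence between finite subsets of $\G$ and finite families of pairwise disjoint symmetric vertices in $\Sigma_{K_S}$ and verify that the relevant fiberwise counts are preserved almost surely. Essential freeness plays a dual role here, both in guaranteeing pairwise disjointness of the symmetric vertices $\alpha(g)$ and in identifying each fiber of $\Sigma_{K_S}$ with the Cayley graph of $\G$. Note that we only need an upper bound on $h_{K_S}(R_\alpha)$, so the fact that its defining infimum ranges over the possibly richer class of all pairwise disjoint symmetric vertices (not just those of the form $A_{\mathrm{sym}}$) is not an issue.
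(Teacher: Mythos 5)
Your proposal is correct and follows essentially the same route as the paper: the first two inequalities are quoted from Gaboriau and Theorem \ref{main} together with $C(\G)\le C(R_\alpha)$, and the last is exactly the paper's observation that, by essential freeness, distinct group elements yield pairwise disjoint symmetric vertices in the Cayley graph of $R_\alpha$ whose fiberwise boundary counts match those in the Cayley graph of $\G$ (the paper uses $\alpha(\gamma^{-1})$ only to match its right-multiplication convention for Cayley edges, which is immaterial for the infimum).
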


\begin{proof}
The first inequalities are from Corollary 3.22  in \cite{Gaboriau02} and Theorem
\ref{main}, while the last one follows from the definitions (simply note that
for any Cayley graph $Y$ of $\G$ and any distinct vertices
$\gamma_1,\gamma_2\in \G=Y^{(0)}$, the graphs of $\alpha(\gamma_1^{-1})$ and $\alpha(\gamma_2^{-1})$ are disjoint vertices in the corresponding Cayley graph of $R_\alpha$ because $\alpha$ is  essentially free).
\end{proof}

Thus we have $2\beta_1(\G)\leq 2C(\G)-2\leq h_e(\G)\leq h(\G)$. Examples of groups for which $h_e(\G)<h(\G)$ are given below. 
\begin{corollary}
If a group has cost 1, then its ergodic isoperimetric constant is zero.
\end{corollary}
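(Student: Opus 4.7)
The plan is to produce, for each $\eta > 0$, an ergodic essentially free measure-preserving action $\alpha$ of $\G$ with $h(R_\alpha) = O(\eta)$, from which $h_e(\G) = 0$ follows by letting $\eta \to 0$. Since $C(\G) = 1$, there is by definition an action of this type on some $(X,\mu)$ with $C(R_\alpha) < 1 + \eta$. The task therefore reduces to a quantitative version of Proposition \ref{hzero}, showing that an ergodic type $\IIi$ equivalence relation of cost close to 1 has small uniform isoperimetric constant.

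I would adapt the proof of Proposition \ref{hzero} essentially verbatim, tracking how the constants depend on the excess cost $C(R_\alpha)-1$. Fix an ergodic $\varphi \in [R_\alpha]$. A quantitative version of Gaboriau's Lemma III.5 in \cite{Gaboriau99} yields a partial automorphism $\psi$ of $R_\alpha$ with $C(\psi) \leq C(R_\alpha) - 1 + \delta < \eta + \delta$ such that $(\varphi, \psi)$ is a graphing of $R_\alpha$. Apply Rokhlin's lemma to $\varphi$ to obtain a tower $B_1, \varphi(B_1), \ldots, \varphi^{n-1}(B_1)$ of height $n$ with error at most $1/4$, so that $\mu(B_1) \geq 3/(4n)$. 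Choosing $n \approx 3/(4(\eta+\delta))$ ensures $C(\psi) < \mu(B_1)$, which is precisely the hypothesis needed to replace $\psi$ by $\psi'$ with $\dom \psi', \Im \psi' \subset B_1$ while keeping $(\varphi,\psi')$ a graphing of $R_\alpha$. Take $A = \{\varphi^i\}_{i=0}^n$: these are $n+1$ pairwise disjoint symmetric vertices in the Cayley graph $\Sigma_{\varphi, \psi'}$. For almost every $x$, the orbit segment $\{x, \varphi(x), \ldots, \varphi^n(x)\}$ meets $B_1$ in at most one point, because consecutive images lie on consecutive floors of the tower. Hence at most two $\psi'$-edges and two $\varphi$-edges (at the endpoints of the orbit segment) cross the boundary of $A^x$, giving $|\del A^x| \leq 4$. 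Therefore $h(R_\alpha) \leq 4/(n+1) = O(\eta)$, and $h_e(\G) \leq h(R_\alpha) = O(\eta)$.

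The main obstacle is conceptual rather than technical: Proposition \ref{hzero} is stated for relations of cost \emph{exactly} 1, whereas $C(\G) = 1$ only guarantees actions with cost approaching 1 (the existence of a cost-exactly-1 action for every cost-1 group being the still-open fixed-price problem). The remedy, sketched above, is a quantitative bound of the form $h(R) = O(C(R) - 1)$ in a neighborhood of cost 1. Its substantive ingredient is the quantitative form of Gaboriau's Lemma III.5, which is implicit in his construction: the excess cost $C(R) - 1$ over the cost of $\varphi$ is exactly what appears as $C(\psi)$, with no need to assume $C(R) = 1$ exactly. Combined with Theorem \ref{main}, this even shows $h(R)$ and $C(R)-1$ are comparable in this regime.
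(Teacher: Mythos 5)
Your argument is correct, but it takes a different and more laborious route than the paper, and the reason you give for the detour rests on a misreading of the literature. The paper's proof is two lines: by \cite[Proposition VI.21]{Gaboriau99} the infimum in the definition of $C(\G)$ is \emph{attained} --- and, after replacing the infinite product measure by an ergodic joining, attained by an ergodic essentially free action --- so a cost-$1$ group admits an action whose orbit relation has cost exactly $1$; Proposition \ref{hzero} then gives $h(R_\alpha)=0$, hence $h_e(\G)=0$. The ``still-open fixed-price problem'' you invoke is the question of whether \emph{all} essentially free actions of the group have the same cost, i.e., whether the supremum over actions equals the infimum; the attainment of the infimum is a theorem of Gaboriau, and it is precisely the ingredient the paper's proof uses, so the obstacle you are working around does not exist. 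That said, your workaround is sound: the proof of Proposition \ref{hzero} is indeed quantitative, and tracking constants as you do gives $h(R)\le c\,(C(R)-1)$ for $C(R)$ close to $1$ (the admissible Rokhlin tower height is of order $(C(R)-1)^{-1}$, while the boundary of $A^x$ stays bounded by an absolute constant --- up to the same bookkeeping the paper itself elides about orbit segments wrapping around the top of the tower, which changes the constant $4$ slightly but nothing else). Combined with the lower bound of Theorem \ref{main}, this shows $h(R)\asymp C(R)-1$ in this regime, a genuine if minor refinement not stated in the paper; so your proof establishes the corollary without invoking the attainment of the infimum, at the price of reproving Proposition \ref{hzero} in quantitative form.
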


\begin{proof}
This follows from Proposition \ref{hzero} and from the fact that the infimum over the actions of the group occurring in the definition of the cost is attained \cite[Proposition VI.21]{Gaboriau99}. 
Note that the infimum in \cite{Gaboriau99} is taken over all (not
necessarily ergodic) measure-preserving essentially free actions, but this infimum is attained (and thus is a minimum) for an ergodic action, as one easily sees  by replacing the infinite product measure by an ergodic joining \cite{Glasner} in the proof of Proposition VI.21 in \cite{Gaboriau99}.
\end{proof}

Thus the class of uniformly non-amenable groups appears to be much larger
from the geometric point of view than from the ergodic point of view.
Breuillard and Gelander  proved in \cite{Breuillard} that for an arbitrary
field $K$, any non-amenable and finitely generated subgroup of $\GL_n(K)$
is uniformly non-amenable. This is the case, for instance,  for
$\SL_3(\ZI)$,
while this group has cost 1 (see \cite{Gaboriau99}) and thus
$h_e(\SL_3(\ZI))=0$. More generally, if $\G$ is a lattice in a semi-simple
Lie group of real rank at least 2, then $\G$ has cost 1 by Corollaire VI.30
in \cite{Gaboriau99}. Note that non-uniform lattices have fixed price
\cite{Gaboriau99} and in that case any measure-preserving action gives an
equivalence relation with trivial uniform isoperimetric constant.

\begin{corollary}[Compare \cite{Breuillard}]\label{lattice}
Lattices in a semi-simple Lie group of real rank at least 2 have trivial
ergodic isoperimetric constant.
\end{corollary}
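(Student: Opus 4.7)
The plan is to combine Gaboriau's cost computation for higher rank lattices with the previous corollary, which states that any group of cost 1 has trivial ergodic isoperimetric constant. The proof should therefore be quite short; the substantive input is Gaboriau's theorem, not any new geometric construction.

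First, I would invoke Corollaire VI.30 of \cite{Gaboriau99}, which asserts that every lattice $\G$ in a semi-simple Lie group of real rank at least $2$ satisfies $C(\G) = 1$. This is the only heavy input from outside the present paper. With this in hand, the previous corollary (groups of cost $1$ have vanishing ergodic isoperimetric constant) immediately yields $h_e(\G) = 0$.

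For a complete presentation I would also note the contrast with the non-ergodic invariant $h(\G)$: by the theorem of Breuillard--Gelander \cite{Breuillard}, these same lattices are \emph{uniformly} non-amenable (as finitely generated non-amenable subgroups of some $\GL_n(K)$), so $h(\G) > 0$ while $h_e(\G) = 0$. This makes precise the assertion in the text that uniform non-amenability is a strictly weaker property from the ergodic point of view. For the \emph{non-uniform} case one can say more: since non-uniform lattices have fixed price in the sense of \cite{Gaboriau99}, \emph{every} essentially free measure-preserving ergodic action $\alpha$ of $\G$ satisfies $C(R_\alpha) = C(\G) = 1$, and Proposition \ref{hzero} then gives $h(R_\alpha) = 0$ for each such $\alpha$ individually, not merely an infimum.

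There is no real obstacle; the only mild subtlety lies in unpacking how the previous corollary was derived, namely that the infimum in the definition of $C(\G)$ is attained at an ergodic free p.m.p.\ action (Proposition VI.21 of \cite{Gaboriau99}, together with the ergodic joining argument recalled there), so that $C(\G)=1$ does produce an ergodic free action whose orbit relation has cost $1$, to which Proposition \ref{hzero} applies. Once this attainment is recorded, the proof is a direct citation chain.
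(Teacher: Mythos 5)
Your proposal is correct and follows exactly the paper's own route: cite Corollaire VI.30 of \cite{Gaboriau99} to get $C(\G)=1$ and then apply the preceding corollary (groups of cost $1$ have $h_e=0$), whose reliance on the attainment of the cost infimum at an ergodic free action (Proposition VI.21 plus the ergodic joining remark) and Proposition \ref{hzero} you correctly identify. Your supplementary remarks on Breuillard--Gelander and on fixed price for non-uniform lattices likewise match the paper's own surrounding discussion.
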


For the case of direct product of groups, one gets the following result.

\begin{corollary}\label{product}
Finitely generated groups that are decomposable as a direct product of two
infinite groups have trivial ergodic isoperimetric constant.
Finitely generated equivalence relations that are decomposable as a direct
product of two equivalence relations with infinite classes have trivial
uniform isoperimetric constant.
\end{corollary}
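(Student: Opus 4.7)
The plan is to reduce both statements to Proposition \ref{hzero} via Gaboriau's computations of the cost of direct products in \cite{Gaboriau99}. Recall that Gaboriau shows that a direct product of two equivalence relations of type $\IIi$ with infinite classes has cost $1$, and, correspondingly, that the direct product of two infinite countable groups has cost $1$.

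For the equivalence-relation statement, suppose $R = R_1 \times R_2$ is ergodic and decomposable as a direct product, where each $R_i$ has infinite classes. Gaboriau's product formula gives $C(R) = 1$, and Proposition \ref{hzero} then yields $h(R) = 0$. So this direction is immediate once we cite the product formula for cost.

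For the group statement, let $\G = \G_1 \times \G_2$ with both $\G_i$ infinite and finitely generated. Then Gaboriau's formula shows $C(\G) = 1$. As was noted in the proof of the corollary preceding this one, the infimum defining $C(\G)$ is actually attained by some \emph{ergodic} essentially free measure-preserving action $\alpha$ of $\G$ (replacing an infinite product by an ergodic joining in the proof of Proposition VI.21 in \cite{Gaboriau99}), so that $C(R_\alpha) = 1$. Proposition \ref{hzero} then gives $h(R_\alpha) = 0$, and by the definition of $h_e(\G)$ as an infimum over such actions, we conclude $h_e(\G) = 0$.

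The argument therefore consists of invoking two already-established ingredients in sequence: first the product formula for cost from \cite{Gaboriau99}, then the vanishing result of Proposition \ref{hzero}. There is no real obstacle here; the only point requiring care is that, in the group case, one must ensure the existence of an ergodic essentially free action realizing cost $1$, which has been addressed in the preceding corollary.
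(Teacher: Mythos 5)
Your proposal is correct and follows essentially the same route as the paper, which likewise deduces both statements from the fact that the cost of a direct product is $1$ (\cite[Proposition VI.23]{Gaboriau99}, \cite[Proposition 6.21]{KM}) combined with Proposition \ref{hzero} and, for the group case, the preceding corollary's realization of cost $1$ by an ergodic essentially free action. Your write-up merely makes explicit the steps the paper leaves implicit.
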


\begin{proof}
This follows from the fact that the cost of a direct product is 1
(see \cite[Proposition VI.23]{Gaboriau99} and \cite[Proposition 6.21]{KM}).
\end{proof}

\begin{appendix}

\section{Comparison with alternative definitions of uniform amenability}\label{Hyperb}

In this section, we analyze the differences and analogies between the different notions of uniform non-amenability.
 In \cite{Arj}, Arzhantseva, Burillo, Lustig, Reeves, Short and Ventura
 give a definition of \emph{F{\o}lner constants}, which is very close to
 our definition of isoperimetric constants. The only difference in the
 definition is the fact that they consider the inner boundary while we
 consider the edge boundary.
For a finitely generated group $\Gamma$ with generating system $S$ and
$A$ a finite part of the Cayley graph of $\Gamma$, one has
$$\mbox{F{\o}l}_S (\Gamma,A)=\frac{\abs{ \del_S^{int} A}}{ \abs  A} \mbox{
with }  \del_S^{int} A=\{ a \in A\,;\; \exists x \in S\cup S^{-1}, ax \not \in A\} ,$$
while our boundary is $\del_S A=\{ (a, ax)\,;\;  x \in S\cup S^{-1},  a\in A, ax \not \in A\}$ with $(a,ax)$ the edge between the vertices $a$ and $ax$.

Considering sets $A$ without isolated vertices, one  immediately gets that
\begin{equation}\label{compar}
\mbox{F{\o}l}_S (\Gamma,A) \leq h_S(\Gamma,A) \leq (2\abs
S-1)\mbox{F{\o}l}_S (\Gamma,A)\,,
\end{equation}
whence one has the following proposition.
\begin{proposition}
Let $\mbox{F{\o}l}_S (\Gamma)$ be the F{\o}lner constant defined in \cite{Arj}. Then 
$$\mbox{F{\o}l}_S (\Gamma)\leq h_S(\Gamma) \leq (2\abs  S-1)\mbox{F{\o}l}_S(\Gamma).$$
In particular, $\mbox{F{\o}l}(\Gamma) \leq h(\Gamma) $.
\end{proposition}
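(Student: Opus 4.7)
The plan is to establish the pointwise inequality \eqref{compar} first and then pass to the infima over $A$. Fix a generating set $S$ and a finite subset $A$ of vertices of the Cayley graph.

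For the left-hand inequality I would argue as follows. Every vertex $a\in\partial_S^{int}A$ is by definition incident to at least one generator $x\in S\cup S^{-1}$ with $ax\notin A$, and the edge $(a,ax)$ is then an element of $\partial_S A$. The map $a\mapsto (a,ax)$ (choosing one such $x$ for each $a$) is injective, so $\abs{\partial_S^{int}A}\leq\abs{\partial_S A}$, and dividing by $\abs A$ yields $\mathrm{F\o l}_S(\Gamma,A)\leq h_S(\Gamma,A)$.

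For the right-hand inequality, restrict now to $A$ having no isolated vertex (i.e.\ every $a\in A$ is joined in the Cayley graph to at least one other element of $A$). Each edge $(a,ax)\in\partial_S A$ has its inner endpoint $a$ in $\partial_S^{int}A$, and for a fixed $a\in\partial_S^{int}A$ the number of such outgoing boundary edges is at most $2\abs S-1$, since one of the $2\abs S$ generators must land back inside $A$ by the no-isolated-vertex assumption. Hence $\abs{\partial_S A}\leq(2\abs S-1)\abs{\partial_S^{int}A}$, proving $h_S(\Gamma,A)\leq(2\abs S-1)\mathrm{F\o l}_S(\Gamma,A)$.

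To conclude the proposition, I take the infima. The left inequality $\mathrm{F\o l}_S(\Gamma,A)\leq h_S(\Gamma,A)$ is valid for every finite $A$, so passing to the infimum gives $\mathrm{F\o l}_S(\Gamma)\leq h_S(\Gamma)$. For the other side, one needs to observe that in the infimum defining $\mathrm{F\o l}_S(\Gamma)$ one may restrict to finite sets without isolated vertices: given any finite $A$, iteratively removing isolated vertices produces either the empty set (excluded, since then the Følner ratio would be $\geq 1$) or a set $A'$ with no isolated vertex, and a short computation shows $\mathrm{F\o l}_S(\Gamma,A')\leq\mathrm{F\o l}_S(\Gamma,A)$ in the relevant regime (one may also simply note that balls in the Cayley graph, or any increasing Følner sequence of "thick" sets, have no isolated vertices). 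Combining with the pointwise right inequality yields $h_S(\Gamma)\leq(2\abs S-1)\mathrm{F\o l}_S(\Gamma)$, and finally the inequality $\mathrm{F\o l}(\Gamma)\leq h(\Gamma)$ follows by taking the infimum of the left inequality over all finite generating sets $S$.

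The only mildly delicate point is the reduction to sets without isolated vertices for the upper bound; everything else is a one-line counting argument directly comparing the two notions of boundary.
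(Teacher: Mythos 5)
Your argument is correct and follows the same route as the paper, which simply records the pointwise comparison $\mbox{F{\o}l}_S(\Gamma,A)\leq h_S(\Gamma,A)\leq(2\abs S-1)\mbox{F{\o}l}_S(\Gamma,A)$ for sets $A$ without isolated vertices and passes to the infima. The only difference is that you spell out the counting arguments and the reduction to sets without isolated vertices (via the observation that removing an isolated vertex does not increase the F{\o}lner ratio since $\abs{\del_S^{int}A}\leq\abs A$), details the paper leaves implicit.
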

This means that a uniformly non-amenable group in the sense of \cite{Arj} is uniformly non-amenable in our sense.  
It is unclear whether the converse is true:  there might exist 
groups that are not uniformly non-amenable in the sense of \cite{Arj} but
are uniformly non-amenable in our sense. However, for all known examples of
groups that are not uniformly non-amenable in the sense of \cite{Arj},
there exists a maximal bound for the size of the generating systems used to
reach the infimum, so these groups are also not uniformly non-amenable in our sense. 

Another notion of uniform non-amenability was introduced by Osin in
\cite{Osin}, linked with the Kazhdan estimates for the regular
representation $\lambda$. A group is said to be uniformly non-amenable in
the sense of \cite{Osin} if $\alpha(\Gamma) =\inf_S \alpha(\Gamma,S) >0$, where $S$ runs over all finite generating systems of $\Gamma$ and
$$
\alpha(\Gamma, S)
= \inf\left\{ \max_{x\in S}\norm{\lambda(x) u-u} \,;\ u \in \ell^2(\Gamma),
\norm{u} = 1 \right\}.
$$
(Note that this constant is also presented as a Kazhdan constant for the regular representation in \cite {Arj} and denoted by $K(\Lambda_\Gamma, \Gamma)$.)

Consider now a Cayley graph $Y$ associated with a system of generators $S$,
and let $A$ be a finite subset in $Y$ and $u_A$ be the normalized
characteristic function $u ={\chi_{A^{-1}}\over{\sqrt{ \abs  A}}}$. Then one has, for any $x \in S$
$$\norm{\lambda(x)u_A-u_A}^2={1 \over{ \abs  A}}\sum_{ g \in \Gamma}[
\chi_{A^{-1}} (x^{-1}g) -\chi_{A^{-1}} (g)]^2 \leq {1 \over{ \abs
A}}\sum_{x \in S, g \in \Gamma}[ \chi_{A} (gx) -\chi_{A} (g)]^2  = {\abs{
\del_S A}\over \abs  A} \cdot$$
This implies that
$$
\alpha(\Gamma, S)
\leq \norm{\lambda(x)u_A-u_A} \leq \sqrt{h_S(\Gamma,A)}\,,
$$
whence we have proved the following proposition.
\begin{proposition}
Let $\alpha(\Gamma)$ be the constant introduced by Osin \cite{Osin}. Then one has
$$\alpha(\Gamma) \leq \sqrt{h(\Gamma)}.$$
\end{proposition}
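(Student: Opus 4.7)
The plan is to exhibit explicit test functions in $\ell^2(\Gamma)$ that transform a small edge-boundary-to-volume ratio into a small Kazhdan-type constant, thereby running the standard Cheeger-inequality test-function argument in reverse. Fix a finite generating set $S$ and a nonempty finite subset $A \subset \Gamma$; as my candidate vector I will take the normalized indicator $u_A = \chi_{A^{-1}}/\sqrt{|A|}$, which lies in the unit sphere of $\ell^2(\Gamma)$.

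The central step is to compute (or at least bound) $\norm{\lambda(x) u_A - u_A}^2$ for each $x \in S$. Expanding the $\ell^2$ norm, the summand $[\chi_{A^{-1}}(x^{-1}g) - \chi_{A^{-1}}(g)]^2$ is $0$ or $1$, and equals $1$ precisely when exactly one of the two vertices $g^{-1}$ and $g^{-1}x$ in the Cayley graph lies in $A$. Thus $|A|\cdot \norm{\lambda(x) u_A - u_A}^2$ counts exactly the number of edges in $\partial_S A$ whose label is $x$. In particular, summing over $x \in S$ recovers $|\partial_S A|$, so a fortiori
\[
\max_{x \in S} \norm{\lambda(x) u_A - u_A}^2 \;\le\; \frac{|\partial_S A|}{|A|}.
\]

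By the definition of $\alpha(\Gamma,S)$ as an infimum over unit vectors of $\max_{x \in S}\norm{\lambda(x)u - u}$, the vector $u_A$ provides an upper bound, giving $\alpha(\Gamma,S) \le \sqrt{|\partial_S A|/|A|}$. I then take the infimum over all nonempty finite $A$, which yields $\alpha(\Gamma,S) \le \sqrt{h_S(\Gamma)}$, and finally the infimum over all finite generating sets $S$, giving the desired inequality $\alpha(\Gamma) \le \sqrt{h(\Gamma)}$.

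There is essentially no hard step here: the argument is a textbook test-function computation parallel to the easy direction of Cheeger's inequality. The only point requiring a moment of care is checking that the pointwise difference of characteristic functions truly corresponds to the \emph{edge} boundary used in this paper (rather than the inner boundary of \cite{Arj}), so that the resulting inequality is phrased in terms of $h_S(\Gamma)$ with the conventions of Section~\ref{Cheeger}; once this bookkeeping is done, the square root in the final bound arises simply because the Kazhdan-type quantity $\alpha$ is defined on the $\ell^2$ scale while the isoperimetric constant $h$ is defined on the $\ell^1$ scale.
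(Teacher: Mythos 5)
Your proposal is correct and is essentially the paper's own argument: the same test vector $u_A=\chi_{A^{-1}}/\sqrt{\abs A}$, the same identification of $\abs A\,\norm{\lambda(x)u_A-u_A}^2$ with a count of boundary edges (bounded by the full sum over $x\in S$, i.e.\ by $\abs{\del_S A}$), and the same double infimum over $A$ and then over $S$. No substantive difference from the proof in Appendix A.
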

This means that a uniformly non-amenable group in the sense of Osin is
still uniformly non-amenable in our sense, while the converse seems to be an open question \cite{Arj}.
\section{Some properties of isoperimetric constants}\label{propgroup}
In this section, we restate properties showed in \cite{Arj} on F{\o}lner
constants for the isoperimetric constants we introduce. We have omitted the proofs as they are \emph{quasi verbatim} the ones in \cite{Arj}.
First of all, isoperimetric constants are linked to exponential growth.
Recall that the exponential growth rate of a group $\Gamma$ finitely
generated by $S$ is defined as the limit $\displaystyle{\omega_S(\Gamma)=
\lim_{n\to  \infty}\sqrt[n]{\abs{ B_S(n)}}}$ where $B_S(n)$ is the ball of
elements in $\Gamma$ with geodesic distance (as $S$ words) at most $n$.

\begin{proposition}[Isoperimetric constant and exponential growth]\label{growth} \ \\
Let $\Gamma$ be a finitely generated group and $S$ a finite generating system. Then
$$h(\Gamma)\leq h_S (\Gamma)\leq (2\abs  S-1)[1-{1 \over{\omega_S(\Gamma)}}].$$
\end{proposition}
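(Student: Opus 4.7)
The plan is to use the balls in the Cayley graph $Y$ associated to $S$ as test sets for the isoperimetric ratio, and to exploit the fact that a vertex on the sphere of radius $n$ has at least one neighbor on the sphere of radius $n-1$ (namely a geodesic predecessor), so at most $2|S|-1$ of its incident edges can lie in the boundary $\del_S B_S(n)$.

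The first inequality $h(\Gamma) \leq h_S(\Gamma)$ is immediate from the definition of $h(\Gamma)$ as an infimum over generating sets. For the second, write $S_S(n) = B_S(n) \setminus B_S(n-1)$ for the sphere. If $g \in B_S(n)$ and $(g, gx)$ with $x \in S \cup S^{-1}$ is a boundary edge, then $gx \notin B_S(n)$ forces $|g|_S = n$, i.e., $g \in S_S(n)$. Moreover, for every such $g$ there exists $x_0 \in S \cup S^{-1}$ with $gx_0 \in S_S(n-1)$ (along a geodesic), so the edge $(g, gx_0)$ is not in $\del_S B_S(n)$. Hence each $g \in S_S(n)$ contributes at most $2|S|-1$ boundary edges, giving
\[
|\del_S B_S(n)| \leq (2|S|-1)\,|S_S(n)| = (2|S|-1)\bigl(|B_S(n)| - |B_S(n-1)|\bigr).
\]
Dividing by $|B_S(n)|$ yields
\[
h_S(\Gamma) \leq \frac{|\del_S B_S(n)|}{|B_S(n)|} \leq (2|S|-1)\left(1 - \frac{|B_S(n-1)|}{|B_S(n)|}\right)
\]
for every $n \geq 1$.

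It remains to extract the factor $1 - 1/\omega_S(\Gamma)$. Since $|B_S(n+m)| \leq |B_S(n)|\cdot|B_S(m)|$ (writing any element of length $\leq n+m$ as a product), Fekete's lemma gives $\omega_S(\Gamma) = \lim_n |B_S(n)|^{1/n}$. Writing $\log|B_S(n)| = \sum_{k=1}^n \log(|B_S(k)|/|B_S(k-1)|)$ and dividing by $n$, the Cesaro means of $\log(|B_S(k)|/|B_S(k-1)|)$ converge to $\log\omega_S(\Gamma)$, which forces
\[
\liminf_k \log\!\frac{|B_S(k)|}{|B_S(k-1)|} \leq \log \omega_S(\Gamma),
\]
equivalently $\limsup_k |B_S(k-1)|/|B_S(k)| \geq 1/\omega_S(\Gamma)$. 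Taking the infimum over $n$ in the previous display then gives
\[
h_S(\Gamma) \leq (2|S|-1)\left(1 - \frac{1}{\omega_S(\Gamma)}\right),
\]
as required.

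The calculation itself is routine; the only subtle point is the last step, where one must be careful not to claim that $|B_S(k)|/|B_S(k-1)|$ converges to $\omega_S(\Gamma)$ (which is false in general), but only that its $\liminf$ is bounded above by $\omega_S(\Gamma)$, which is enough to make the infimum of the isoperimetric ratio over balls small.
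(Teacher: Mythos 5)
Your proof is correct, and it follows essentially the same route as the paper, which omits the argument and defers to Arzhantseva et al.\ \cite{Arj}: test the isoperimetric ratio on balls $B_S(n)$, bound the edge boundary by $(2\abs S-1)$ times the sphere size using a geodesic predecessor, and control $\liminf_n \abs{B_S(n)}/\abs{B_S(n-1)}$ by the growth rate rather than claiming convergence of that ratio. Your care with the $\liminf$ versus limit distinction in the last step is exactly the right point to be careful about.
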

Note that we don't know in general in our case if uniform non-amenability
implies uniform exponential growth. This is the case when we know that the
infimum can be attained for systems of generators with uniform bound on the
cardinalities of these systems.

Concerning the isoperimetric constants for subgroups and quotients, the theorems of \cite{Arj} are exactly the same.
\begin{theorem}[Subgroups]\label{subgroups}
\ \\
Let $\Gamma$ be a finitely generated group and $S$ a finite system of generators.
\begin{enumerate}
\item  Consider the subgroup $\Gamma'$ generated by a subsystem $S' \subset S$. Then $\displaystyle{h_S(\Gamma)\geq h_{S'}(\Gamma') .}$
\item Let $H$ be a subgroup of $\Gamma$ generated by a system $T$ with $m$ elements and such that the length of each element of $T$ as a word in $S$ is at most $L$. Then $\displaystyle{h_S(\Gamma)\geq {h_T(H) \over 1+mL}.}$
\end{enumerate}
\end{theorem}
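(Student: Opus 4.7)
The plan for both assertions is a coset decomposition: given a finite set $A\subset\G$, one writes $A$ as a disjoint union over left cosets of the relevant subgroup and compares the $S$-boundary of $A$ in $\G$ to the intrinsic boundaries of the pieces viewed inside the subgroup.

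For (1), let $A\subset\G$ be finite and fix representatives $\gamma_i$ of the cosets of $\G'$ in $\G$; decompose $A=\bigsqcup_i A_i$ with $A_i=A\cap\gamma_i\G'$ and set $B_i=\gamma_i^{-1}A_i\subset\G'$. I would observe that every edge of $\del_{S'}B_i$ in the Cayley graph of $(\G',S')$, after translation by $\gamma_i$, gives an edge of $\del_SA$: an $S'$-edge leaving $B_i$ stays in the coset $\gamma_i\G'$, and since distinct cosets $\gamma_j\G'$ are disjoint it cannot land in $A_j$ for $j\neq i$, so it leaves $A$ entirely. The resulting edges from different cosets are automatically distinct, hence $\sum_i\abs{\del_{S'}B_i}\leq\abs{\del_SA}$. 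Together with $\sum_i\abs{B_i}=\abs{A}$ and $\abs{\del_{S'}B_i}\geq h_{S'}(\G')\abs{B_i}$ for each nonempty $B_i$, averaging yields $\abs{\del_SA}/\abs A\geq h_{S'}(\G')$, and taking the infimum over $A$ proves (1).

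For (2), I would perform the analogous decomposition with respect to $H$ and replace the direct edge comparison by a double-counting argument. Fix once and for all, for each $t\in T$, a reduced $S$-word $s_1\cdots s_k$ of length $k=\abs{t}_S\leq L$ representing $t$. For each $T$-edge of $\del_TB_i$, oriented as $(b,bt)$ with $b\in B_i$, $bt\notin B_i$, $t\in T\cup T^{-1}$, trace the $S$-path $\gamma_ib,\,\gamma_ibs_1,\,\ldots,\,\gamma_ibt$ of length $\leq L$ in the Cayley graph of $(\G,S)$: it starts in $A$ and ends outside $A$ (because $\gamma_ibt\in\gamma_iH\setminus A_i$), so it must cross $\del_SA$, and I assign $(b,bt)$ to its \emph{first} exit edge $(g,gs)$. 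The crucial step is to bound, for each fixed $(g,gs)\in\del_SA$, the number of $T$-edges assigned to it. Knowing $g$ together with a choice of $t$, its word $s_1\cdots s_k$, and a position $j\in\{1,\ldots,k\}$ with $s_j=s$, one recovers $\gamma_ib=g(s_1\cdots s_{j-1})^{-1}$, and then $\gamma_i$ is determined by lookup in the coset representatives of $g(s_1\cdots s_{j-1})^{-1}H$, hence so is $b$. Controlling the number of admissible $(t,j)$-pairs by the sharp constant $1+mL$ then gives $\sum_i\abs{\del_TB_i}\leq(1+mL)\abs{\del_SA}$, and combined with $\sum_i\abs{\del_TB_i}\geq h_T(H)\abs A$ this yields (2).

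The main obstacle I anticipate lies precisely in pinning down the constant $1+mL$: the crude enumeration over $t\in T\cup T^{-1}$ and word positions $1\leq j\leq L$ only gives $2mL$, so obtaining the sharp bound requires canonically orienting each unoriented $T$-edge, choosing inverse words coherently, and separating the contribution of the terminal letter of each word (which yields the additive $+1$). Once this enumeration is executed, the rest of the argument is the same routine averaging over cosets that established part (1).
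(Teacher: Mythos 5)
The paper itself gives no proof of this theorem: Appendix B states that the proofs are omitted because they are \emph{quasi verbatim} those of \cite{Arj}, so there is nothing internal to compare your argument against; your coset-decomposition-plus-path-tracing strategy is indeed the standard one used there. Part (1) of your proposal is correct and complete: the translated $S'$-edges of $\del_{S'}B_i$ inject into $\del_S A$, the images from distinct cosets are disjoint because both endpoints of such an edge lie in $\gamma_i\G'$, and averaging over the pieces finishes the argument.

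Part (2), however, has a genuine gap, which you flag yourself: the count you actually execute gives the constant $2mL$, not the stated $1+mL$, and the refinement you sketch is neither carried out nor clearly workable as described. Concretely, after writing each boundary $T$-edge canonically as $\{h,ht\}$ with $t\in T$ (cutting the generator choices from $2m$ to $m$), a fixed edge $(g,gs)\in\del_S A$ can still receive, for each pair $(t,j)$, one preimage in which the traced path \emph{exits} $A$ at step $j$ (requiring $s_j=s$) and one in which it \emph{enters} $A$ at step $j$ (requiring $s_j=s^{-1}$); these are distinct preimage $T$-edges and both occur whenever $s=s^{-1}$, so the worst-case multiplicity remains $2mL$. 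Moreover, the additive $+1$ you attribute to ``the terminal letter of each word'' does not obviously yield a single term shared across all generators: in the inner-boundary setting of \cite{Arj} the $+1$ comes from the empty prefix (the identity translation, common to all $t$), and that mechanism does not transfer verbatim to an edge-boundary count. What you have proved is therefore $h_S(\G)\geq h_T(H)/(2mL)$, which gives the qualitative conclusion (uniform non-amenability passes from $H$ to $\G$) but not the stated inequality; closing the gap requires either executing the refined enumeration in full, including involutive letters, or rederiving the constant for the edge boundary along the lines of \cite{Arj}.
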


\begin{theorem}[Quotients]\label{quotients}
Let $\Gamma$ be a finitely generated group and $S$ a finite system of
generators. Denote by $N$ a normal subgroup and by $\pi : \Gamma \to
\Gamma/N$ the natural projection. Then $\displaystyle{h_S(\Gamma)\geq
h_{\pi(S)}(\Gamma/N)}$, whence $\displaystyle{h(\Gamma)\geq h(\Gamma/N)}$.
\end{theorem}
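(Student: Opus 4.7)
The plan is to produce, from any finite test set $\bar A$ in the Cayley graph of $\Gamma/N$ with respect to $\pi(S)$, a lifted test set $A$ in the Cayley graph of $\Gamma$ with respect to $S$ that witnesses an isoperimetric ratio no larger than that of $\bar A$. Since $\pi(S)$ generates $\Gamma/N$, this will yield $h_S(\Gamma)\geq h_{\pi(S)}(\Gamma/N)$; the second claim then follows by taking the infimum over finite generating sets $S$ of $\Gamma$, noting that the family $\{\pi(S)\}$ ranges inside the family of finite generating sets of $\Gamma/N$.

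First, I would fix a set-theoretic section $\sigma:\bar A\hookrightarrow \Gamma$ of $\pi$ (one preimage per point) and set $A=\sigma(\bar A)$, so that $\abs A=\abs{\bar A}$. Next I would fix any section $\tau:\pi(S)\to S$ of $\pi_{|S}$, and define a map
\[
\Phi:\del_{\pi(S)}\bar A \longrightarrow \del_S A,\qquad
(\bar a,\bar a\bar s)\longmapsto \bigl(\sigma(\bar a),\,\sigma(\bar a)\tau(\bar s)\bigr).
\]
To see that $\Phi$ lands in $\del_S A$, observe that $\sigma(\bar a)\in A$, while $\pi(\sigma(\bar a)\tau(\bar s))=\bar a\bar s\notin \bar A$, so $\sigma(\bar a)\tau(\bar s)\notin A$ (since every element of $A$ projects into $\bar A$). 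Injectivity of $\Phi$ is immediate: the first coordinate $\sigma(\bar a)$ determines $\bar a$ because $\sigma$ is injective, and then $\tau(\bar s)$ (hence $\bar s$) is determined by the second coordinate.

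Consequently $\abs{\del_{\pi(S)}\bar A}\leq \abs{\del_S A}$, and since $\abs A=\abs{\bar A}$ we obtain
\[
\frac{\abs{\del_{\pi(S)}\bar A}}{\abs{\bar A}}\leq \frac{\abs{\del_S A}}{\abs A}.
\]
Taking the infimum over finite $\bar A\subset \Gamma/N$ on the left and over finite $A\subset \Gamma$ on the right gives $h_{\pi(S)}(\Gamma/N)\leq h_S(\Gamma)$. Taking the infimum over finite generating sets $S$ of $\Gamma$ yields $h(\Gamma)\geq h(\Gamma/N)$, since every $\pi(S)$ is a finite generating set of $\Gamma/N$.

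There is no real obstacle; the only mild subtlety is the case where $\pi$ is not injective on $S$, but this causes no difficulty because the choice of section $\tau:\pi(S)\to S$ (necessarily a set-theoretic right inverse, not a homomorphism) is all that the argument ever needs, and the edge map $\Phi$ remains injective regardless.
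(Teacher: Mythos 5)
There is a genuine gap, and it is fatal: your quantifiers run in the wrong direction. To prove an inequality between two infima, $h_{\pi(S)}(\Gamma/N)\leq h_S(\Gamma)$, you must show that \emph{every} finite $A\subset\Gamma$ admits some finite $\bar A\subset\Gamma/N$ with $\abs{\del_{\pi(S)}\bar A}/\abs{\bar A}\leq\abs{\del_S A}/\abs{A}$. What you actually prove is the reverse coupling: for every $\bar A$ there is one \emph{particular} set upstairs, namely the lift $A=\sigma(\bar A)$, satisfying that inequality. From this you can only conclude $h_{\pi(S)}(\Gamma/N)\leq\inf_{\bar A}\abs{\del_S\sigma(\bar A)}/\abs{\sigma(\bar A)}$, and that infimum runs over the restricted family of lifted sets, so it is $\geq h_S(\Gamma)$ rather than $\leq h_S(\Gamma)$. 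The step ``taking the infimum over finite $\bar A$ on the left and over finite $A$ on the right'' is illegitimate, because your inequality is established only for the coupled pairs $(\bar A,\sigma(\bar A))$, not for arbitrary pairs. Your intermediate claim $\abs{\del_{\pi(S)}\bar A}\leq\abs{\del_S\sigma(\bar A)}$ is correct but points the useless way: it says a lifted set is at least as \emph{bad} an isoperimetric test set upstairs as $\bar A$ is downstairs, which gives no control on $h_S(\Gamma)$.

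The paper omits the proof, deferring to \cite{Arj}, and the argument there goes in the opposite direction and requires one real idea: a co-area (layer) decomposition over the fibres of $\pi$. Given a finite $A\subset\Gamma$, set $a_x=\abs{A\cap\pi^{-1}(x)}$ for $x\in\Gamma/N$ and $\bar A_k=\{x\,;\;a_x\geq k\}$ for $k\geq 1$. Then $\sum_k\abs{\bar A_k}=\abs A$, and a quotient edge $(x,x\bar s)$ belongs to $\del_{\pi(S)}\bar A_k$ for exactly $\abs{a_x-a_{x\bar s}}$ values of $k$; on the other hand, for $s\in S$ with $\pi(s)=\bar s$ the bijection $g\mapsto gs$ from $\pi^{-1}(x)$ to $\pi^{-1}(x\bar s)$ must send at least $\abs{a_x-a_{x\bar s}}$ elements of $A\cap\pi^{-1}(x)$ outside $A$ (or vice versa), producing that many distinct edges of $\del_S A$ lying over $(x,x\bar s)$. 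Summing over quotient edges gives $\sum_k\abs{\del_{\pi(S)}\bar A_k}\leq\abs{\del_S A}$, and the mediant inequality $\min_k\abs{\del_{\pi(S)}\bar A_k}/\abs{\bar A_k}\leq\big(\sum_k\abs{\del_{\pi(S)}\bar A_k}\big)/\big(\sum_k\abs{\bar A_k}\big)$ then yields $h_{\pi(S)}(\Gamma/N)\leq\abs{\del_S A}/\abs A$ for every $A$, as required. Note that the naive choice $\bar A=\pi(A)$ does not suffice when the fibre counts $a_x$ are unequal, which is precisely why the layers $\bar A_k$ are needed; your section-of-$\pi$ construction cannot be repaired into this argument. (Your final deduction of $h(\Gamma)\geq h(\Gamma/N)$ from the $S$-wise inequality is fine once that inequality is actually established.)
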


\section{Some bounds on isoperimetric constants}\label{bounds}
In this section, we give an explicit calculation for the free group, as in \cite{Arj},  and derive some bounds for groups by quotient and subgroup theorems.
\begin{proposition}\label{free}
For the free group on $k$ generators, one has
$h(F_k)=2k-2$.
\end{proposition}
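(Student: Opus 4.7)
The plan is to establish the two matching inequalities $h(F_k)\leq 2k-2$ and $h(F_k)\geq 2k-2$, relying respectively on an explicit computation for the standard generating set and on the main theorem of the paper combined with the classical value of $\beta_1(F_k)$.

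For the upper bound, I would fix the canonical system $S_k$ of $k$ free generators, whose Cayley graph $Y$ is the $2k$-regular tree $T_{2k}$. I would then evaluate $|\del_{S_k} B_n|/|B_n|$ on the ball $B_n$ of radius $n$ centered at the identity. Standard counting in the regular tree gives $|B_n|=1+2k\frac{(2k-1)^n-1}{2k-2}$ and $|\del_{S_k} B_n|=2k(2k-1)^n$, and letting $n\to\infty$ one obtains
\[
\lim_{n\to\infty}\frac{|\del_{S_k} B_n|}{|B_n|}=2k-2.
\]
This already shows $h(F_k)\leq h_{S_k}(F_k)\leq 2k-2$.

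For the lower bound, the cleanest route is to invoke the optimal inequality proved in the present paper, namely Corollary \ref{fsf}: since the Cayley graph of $F_k$ with respect to $S_k$ is a transitive graph, we have $\beta_1(F_k)\leq h(F_k)/2$. It is classical (Cheeger--Gromov, or a direct computation from the contractible tree $T_{2k}$ viewed as an $EF_k$) that $\beta_1(F_k)=k-1$, and therefore
\[
h(F_k)\geq 2\beta_1(F_k)=2k-2.
\]
Combining the two bounds yields $h(F_k)=2k-2$.

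I expect the only real obstacle to be the lower bound, since the upper bound is a straightforward computation on $T_{2k}$; indeed, merely bounding $h_{S_k}(F_k)$ below by the tree isoperimetric inequality $|\del A|\geq (2k-2)|A|+2c$ (where $c$ is the number of components of $A$ in $T_{2k}$) would only give that \emph{the} $S_k$-isoperimetric constant equals $2k-2$, and would leave open the possibility that some other exotic finite generating set $S$ produces a smaller $h_S(F_k)$. The key point of the proposed argument is that the optimal inequality $h(\Gamma)\geq 2\beta_1(\Gamma)$ from Corollary \ref{fsf} rules out this possibility uniformly in $S$, and hence pins down the value of the uniform isoperimetric constant.
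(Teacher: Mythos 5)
Your proposal is correct and follows essentially the same route as the paper: the lower bound $h(F_k)\geq 2\beta_1(F_k)=2k-2$ is obtained exactly as in the paper via Corollary \ref{fsf}, and your explicit ball computation in the $2k$-regular tree is just the concrete instance of the growth-rate bound $h_S(\Gamma)\leq (2\abs{S}-1)\bigl(1-\tfrac{1}{\omega_S(\Gamma)}\bigr)$ (Proposition \ref{growth}) that the paper invokes for the upper bound. Your closing remark correctly identifies why the percolation-theoretic inequality is needed: the ball computation alone only controls $h_{S_k}(F_k)$, not the infimum over all generating sets.
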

\begin{proof}
By Theorem \ref{fsf}, we know that $h(F_k)\geq 2 \beta_1(\Gamma)=2k-2$. On the other hand, by Proposition \ref{growth}, we know that
$$h(F_k)\leq (2k-1)(1-{1\over \omega_S(F_k)})$$
and that $\omega_S(F_k)=2k-1$ for $S$ a system of $k$ free generators of $F_k$.
\end{proof}

From this result and the quotients and subgroup theorems, we can, as in
\cite{Arj}, derive the following for groups.
\begin{proposition}\
Let $\Gamma$ be a finitely generated group that admits a system $S$ of $k$ generators. Then $h(\Gamma)\leq 2k-2$ with equality if only if $\Gamma$ is a free group $F_k$.
\end{proposition}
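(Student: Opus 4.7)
The plan is to combine three tools already developed in the paper: the Quotients theorem (Theorem \ref{quotients}), the growth estimate of Proposition \ref{growth}, and the computation $h(F_k) = 2k-2$ of Proposition \ref{free}.

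For the upper bound, I would note that since the $k$ elements of $S$ generate $\Gamma$, there is a surjection $\pi : F_k \twoheadrightarrow \Gamma$ sending a free basis $T$ of $F_k$ onto $S$. Applying Theorem \ref{quotients} to $\pi$ yields $h_T(F_k) \geq h_S(\Gamma)$. The proof of Proposition \ref{free} actually establishes the stronger statement $h_T(F_k) = 2k-2$: the inequality $h_T(F_k) \leq 2k-2$ comes from Proposition \ref{growth} with $\omega_T(F_k) = 2k-1$, while the reverse inequality follows from $h_T(F_k) \geq h(F_k) = 2k-2$. Therefore $h(\Gamma) \leq h_S(\Gamma) \leq h_T(F_k) = 2k-2$.

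For the equality case, the ``if'' direction is just Proposition \ref{free}. Conversely, assume $h(\Gamma) = 2k-2$. Then $2k-2 = h(\Gamma) \leq h_S(\Gamma) \leq 2k-2$ forces $h_S(\Gamma) = 2k-2$, and Proposition \ref{growth} gives $2k-2 \leq (2k-1)(1 - 1/\omega_S(\Gamma))$, which rearranges to $\omega_S(\Gamma) \geq 2k-1$. The reverse bound $\omega_S(\Gamma) \leq 2k-1$ is elementary (each vertex in the Cayley graph has valence at most $2k$, giving $\abs{B_S(n)} \leq 1 + 2k\sum_{i=0}^{n-1}(2k-1)^i$), so $\omega_S(\Gamma) = 2k-1$. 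From here one concludes $\Gamma \cong F_k$ with $S$ a free basis, by the classical fact that saturation of the exponential growth bound forces the Cayley graph of $(\Gamma, S)$ to be the $2k$-regular tree: a non-trivial reduced relation of minimal length $\ell$ in $S$ would produce distinct reduced $S$-words representing the same element of $\Gamma$, and a standard subadditivity/counting argument then shows the exponential growth rate drops strictly below $2k-1$.

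The main obstacle is the final implication $\omega_S(\Gamma) = 2k-1 \Rightarrow \Gamma \cong F_k$, which, although classical, is the only ingredient not directly available from the paper and requires a short independent argument. Everything else is a routine assembly of Theorem \ref{quotients}, Proposition \ref{growth}, and the computation $h(F_k) = 2k-2$ of Proposition \ref{free}.
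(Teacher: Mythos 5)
Your assembly is correct and is exactly the route the paper intends: the paper states this proposition without proof, saying only that it is derived ``as in \cite{Arj}'' from Proposition \ref{free} together with the quotient and subgroup theorems, and your combination of Theorem \ref{quotients} (applied to $F_k\twoheadrightarrow\Gamma$), Proposition \ref{growth}, and the computation $h_T(F_k)=2k-2$ is precisely that derivation. The one ingredient you rightly flag as external---that $\omega_S(\Gamma)=2k-1$ for a $k$-element generating set forces $(\Gamma,S)$ to be free (valid for $k\geq 2$)---is the same classical fact that \cite{Arj} invokes for the equality case, so nothing is missing relative to the paper's own (omitted) argument.
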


\bigskip
N.B. The present paper substitutes and extends a short  note by the second
author circulating  during his Ph.D.\ under the same title, where the group case only  (Theorem \ref{CG-groups}) was considered. 
 
\end{appendix}


\begin{thebibliography}{00}

\bibitem{AnanRen}
C. Anantharaman-Delaroche and J. Renault. ``Amenable Groupoids", 
L'Enseignement Math\'{e}matique, Geneva, \textit{2000}.


\bibitem{Arj} Arzhantseva G., Burillo J., Lustig M., Reeves L., Short H.,
Ventura E., ``Uniform non-amenability", Adv. in Math. 197 (2), 499--522,
\textit{2005}.



\bibitem{Atiyah76} Atiyah M. F.,  ``Elliptic operators, discrete groups and von Neumann algebras",  Colloque ``Analyse et Topologie" en l'Honneur de Henri Cartan (Orsay, 1974), pp. 43--72. Asterisque, No. 32-33, Soc. Math. France, Paris, \textit{1976}.

\bibitem{BLPSusf}
Benjamini I., Lyons R., Peres Y., Schramm O.
``Uniform spanning forests",
Ann. Probab. 29, 1--65,
\textit{2001}.


\bibitem{Breuillard} Breuillard E., Gelander T., ``Cheeger constant and algebraic entropy of linear groups", Int. Math. Res. Not., no. 56, 3511--3523, \textit{2005}. 

\bibitem{CG86} Cheeger J., Gromov M., ``$L\sb 2$-cohomology and group cohomology",  Topology 25, no. 2, 189--215, \textit{1986}.

\bibitem{CFW81} Connes A., Feldman J., Weiss B., ``An amenable equivalence relation is generated by a single transformation", Ergod. Th. \& Dynam. Sys., 1, 431-450, \textit{1981}.

\bibitem{FM77} Feldman J., Moore C.,  ``Ergodic equivalence relations, cohomology, and von Neumann algebras. I",  Trans. Amer. Math. Soc., 234(2), 289-324, \textit{1977}. 


\bibitem{Gaboriau99} Gaboriau D., ``Co\^ut des relations d'\'equivalence et des groupes",  Invent. Math.  139,  no. 1, 41--98, \textit{2000}.

\bibitem{Gaboriau02} Gaboriau D., ``Invariants $l\sp 2$ de relations d'\'equivalence et de groupes", Publ. Math. Inst. Hautes \'Etudes Sci.  No. 95, 93--150, \textit{2002}.

\bibitem{GabHD} 
Gaboriau, D. 
``Invariant percolation and harmonic {D}irichlet functions",
Geom. Funct. Anal. 15, 1004--1051,
\textit{2005}.

\bibitem{GabLyons} 
Gaboriau, D., Lyons R.,
``A measurable-group-theoretic solution to von
Neumann's problem", preprint,
\textit{2007}.


\bibitem{Glasner}
 Glasner E.,  ``Ergodic Theory via Joinings", American Mathematical Society,
Providence, Rhode Island, \textit{2003}.


\bibitem{Gromov93} Gromov M., ``Asymptotic invariants of infinite groups",  Geometric group theory, Vol. 2 (Sussex, 1991), 1--295, London Math. Soc. Lecture Note Ser., 182, Cambridge Univ. Press, Cambridge, \textit{1993}. 

\bibitem{Harpe00} de la Harpe P., ``Topics in geometric group theory", Chicago Lectures in Math. Series, \textit{2000}.


\bibitem{KM}
Kechris A.,  Miller B.,
``Topics in orbit equivalence",
Lecture Notes in Mathematics 1852. Berlin: Springer, \textit{2004}. 

\bibitem{Luck02} L\"uck W., ``$L\sp 2$-invariants: theory and applications to geometry and $K$-theory", Ergebnisse der Mathematik und ihrer Grenzgebiete. 3. Folge. A Series of Modern Surveys in Mathematics [Results in Mathematics and Related Areas. 3rd Series. A Series of Modern Surveys in Mathematics], 44. Springer-Verlag, Berlin, \textit{2002}.

\bibitem{Luck} L\"uck W. ``Dimension theory of arbitrary modules over finite von Neumann algebras and $L\sp 2$-Betti numbers, I Foundations",  J. Reine Angew. Math. 495, 135--162, \textit{1998}. 

\bibitem{LPSmsf} 
Lyons R., Peres Y., Schramm O.,
``Minimal spanning forests",
Ann. Probab. 34, 1665--1692,
\textit{2006}.

\bibitem{Lyonsbetti}
Lyons R.,
``Random complexes and $\ell^2$-{B}etti numbers",
in preparation.

\bibitem{Lyonsphase}
Lyons R.,
``Phase transitions on nonamenable graphs", J. Math. Phys.
41, 1099--1126, \textit{2000}.

\bibitem{Osin} Osin D., ``Weakly amenable groups",  Computational and statistical group theory (Las Vegas, NV/Hoboken, NJ, 2001), 105--113, Contemp. Math., 298, Amer. Math. Soc., Providence, RI, \textit{2002}. 

\bibitem{Osin2} Osin D., ``Uniform non-amenability of free Burnside
groups", Arch. Math. (Basel) 88, no. 5, 403--412, \textit{2007}.

\bibitem{Pansu96} Pansu P., ``Introduction to $L\sp 2$ Betti numbers",  Riemannian geometry (Waterloo, ON, 1993), 53--86,  Fields Inst. Monogr., 4, 
Amer. Math. Soc., Providence, RI, \textit{1996}. 

\bibitem{sc} Pichot M., ``Semi-continuity of the first $\ell^2$-Betti number on the space of finitely generated groups'', Comm. Math. Helv.,  81, No. 3, 643--652, \textit{2006}.

\bibitem{ef} Pichot M., ``Espaces mesur\'es singuliers fortement ergodiques
--- \'Etude m\'etrique-mesur\'ee",  Ann. Inst. Fourier, 57, no. 1, 1--43, \textit{2007}.

\bibitem{ts} Pichot M., ``Sur la th\'eorie spectrale des relations d'\'equivalence mesur\'ees'', 
J. Inst. Math. Jussieu, Vol. 6, no. 03, 453--500, \textit{2007}.

\bibitem{th} Pichot M., ``Quasi-p\'eriodicit\'e et th\'eorie de la mesure",
Ph.D. Thesis. \'Ecole Normale Superieure de Lyon, \textit{2005}.


\end{thebibliography}
\end{document}